\title{Level correspondence of $K$-theoretic $I$-function in Grassmann duality}
\author{Hai Dong and Yaoxiong Wen}
\date{}
\newtheorem{Theorem}{Theorem}[section]
\newtheorem{Lemma}{Lemma}[section]
\newtheorem{Proposition}{Proposition}[section]
\newtheorem{Corollary}{Corollary}[section]
\newtheorem{Definition}{Definition}[section]
\newtheorem{Example}{Example}[section] 
\newtheorem{Remark}{Remark}[section]
\newenvironment{proof}{{\noindent\it Proof}\quad}{\hfill $\square$\par}
\begin{document}
\maketitle

\begin{abstract}
In this paper, we prove a class of nontrivial q-Pochhammer symbol identities with extra parameters by iterated residue method. Then we use these identities to find relations of the quasi-map $K$-theoretical $I$-functions with level structure between Grassmannian and its dual Grassmannian. Here we find an interval of levels within which two $I$-functions are the same, and on the boundary of that interval, two $I$-functions are intertwining with each other. We call this phenomenon level correspondence in Grassmann duality.
\end{abstract}

\setcounter{tocdepth}{3}
\setcounter{secnumdepth}{4}

\tableofcontents
\clearpage

\section{Introduction}
\label{intro}
The quantum $K$-theory was introduced by Givental \cite{givental2000wdvv} and Y.P. Lee \cite{lee2004quantum} decades ago. Recently, Givental shows that q-hypergeometric solutions represent $K$-theoretic Gromov-Witten invariants in the toric case \cite{2015arXiv150903903G} and Ruan-Zhang \cite{2018arXiv180406552R} introduce the level structures and there is a serendipitous discovery that some special toric spaces with certain level structures result in Mock theta functions. Nevertheless, beyond the toric case, much less is known.

 The recent explosion of study of the quantum $K$-theory was from a fundamental relation between 3d supersymmetric gauge theories and quantum $K$-theory of so called Higgs branch discovered by the works of Nekrasov \cite{Nekrasov9606} and Nekrasov and Shatashvili \cite{Nekrasov0901} \cite{Nekrasov0908}, amongst many others. For the concrete case of massless theories with a non-trivial UV-IR flow, Hans Jockers and Peter Mayr \cite{jockers20183d} show a 3d gauge theory/quantum $K$-theory correspondence, connecting the BPS partition functions of specific $\mathcal{N}=2$ supersymmetric gauge theories to Givental's permutation equivariant  K-theory. Besides, Hans Jockers, Peter Mayr, Urmi Ninad, Alexander Tabler \cite{jockers2019wilson} and Kazushi Ueda, Yutaka Yoshida \cite{ueda20193d} establish the correspondence between 3d gauge theory and the quantum $K$-ring and $I$-function of Gr(r,n) independently. Now, it is well-understood that the level structures introduced by Ruan-Zhang \cite{2018arXiv180406552R} are the key new feature for so called 3d $\mathcal{N}=2$ theory (Chern-Simons term).

One of key feature of gauge theory is Seiberg-duality which has been studied in 2d by Giulio Bonelli, Antonio Sciarappa, Alessandro Tanzini, Petr Vasko \cite{cite-key} and the first author. As far as authors' knowledge, very little is known in 3d $\mathcal{N}=2$ case. The results of this article hopefully will contribute some clarity. The  simplest example is Grassmannian $Gr(r,V)$ versus dual Grassmannian $Gr(n-r,V^*)$. However, it is unknown how to match the level structure. Without misunderstanding, we will use $Gr(r,n)$ and $Gr(n-r,n)$ to denote Grassmannian and its dual respectively. They are geometrically isomorphic. However, they encode very different combinatorial data. A long-standing problem is to match their combinatorial data directly. For example, the presentations of $K$-theoretic $I$-functions depend on their gauge theory representation/combinatorial data, and it is tough to see why the $I$-function of Grassmannian equals the $I$-function of dual Grassmannian. In this paper, we give the explicit formula of $K$-theoretic $I$-function of Grassmannian with level structure by using abelian/non-abelian correspondence  \cite{2019arXiv190600775W} as follows
\begin{align}
I^{Gr(r,n),E_r,l}_{T,d}=\sum_{d_1+d_2+ \cdots + d_r = d} Q^d \prod_{i,j=1}^r \frac{\prod^{d_i-d_j}_{k=-\infty}(1-q^kL_iL^{-1}_j)}{\prod^{0}_{k=-\infty}(1-q^kL_iL^{-1}_j)}\prod_{i=1}^{r}\frac{(L^{d_{i}}_{i}q^{\frac{d_{i}(d_{i}-1)}{2}})^l}{\prod_{k=1}^{d_i}\prod_{m=1}^{n}(1-q^kL_i\Lambda^{-1}_m)} \nonumber
\end{align}
and
\begin{align}
I^{Gr(n-r,n),E_{n-r},l}_{T,d}=\sum_{d_1+d_2+ \cdots + d_{n-r} = d} Q^d \prod_{i,j=1}^{n-r} \frac{\prod^{d_i-d_j}_{k=-\infty}(1-q^k\tilde{L}_i\tilde{L}^{-1}_j)}{\prod^{0}_{k=-\infty}(1-q^k\tilde{L}_i\tilde{L}^{-1}_j)}\prod_{i=1}^{n-r}\frac{(\tilde{L}^{d_{i}}_{i}q^{\frac{d_{i}(d_{i}-1)}{2}})^l}{\prod_{k=1}^{d_i}\prod_{m=1}^{n}(1-q^k\tilde{L}_i\Lambda _m)} \nonumber	
\end{align}

We want to remark here that the isomorphism between Grassmannian and its dual would imply the equivalence of $J$-function when level $l$ is $0$. In fact, $I$-function is known to be different from $J$-function with negative levels.  

In this paper, we use Proposition \ref{Thm-0} to show the relations of the equivariant $I$-function between Grassmannian $Gr(r,n)$ and that of dual Grassmannian $Gr(n-r,n)$ with level structures, here we find an interval of levels within which two $I$-functions with levels are the same, and on the boundary of that interval, two $I$-functions with levels are intertwining with each other. We call this phenomenon level correspondence in Grassmann duality. The existence of certain interval of level is very mysterious to us. We hope that our result will give some hint how to
formulate Seiberg-duality for a general target. 

\begin{Theorem}(Level Correspondence)
For Grassmannian $Gr(r,n)$ and its dual Grassmannian $Gr(n-r,n)$ with standard $T=(\mathbb{C}^*)^n$ tours action , let $E_r$, $E_{n-r}$ be the standard representation of $\operatorname{GL}(r,\mathbb{C})$ and $\operatorname{GL}(n-r,\mathbb{C})$, respectively. Consider the following equivariant $I$-function
\begin{align*}
	I_T^{Gr(r,n),E_r,l}=&1+\sum_{d=1}^{\infty}I^{Gr(r,n),E_r,l}_{T,d}Q^d  \\
	I_T^{Gr(n-r,n),E_{n-r}^\vee,-l}=&1+\sum_{d=1}^{\infty}I^{Gr(n-r,n),E^\vee_{n-r},-l}_{T,d}Q^d
\end{align*}
Then we have following relations between $I^{Gr(r,n),E_r,l}_{T,d}$ and $I_T^{Gr(n-r,n),E_{n-r}^\vee,-l}$ in $K^{loc}_T(Gr(r,n))\otimes\mathbb{C}(q) \cong K^{loc}_T(Gr(n-r,n))\otimes\mathbb{C}(q)$:

\begin{itemize}
\item For $1-r\leq l\leq n-r-1$, we have 
\begin{align}
			I_{T,d}^{Gr(r,n),E_{r},l} = 	I_{T,d}^{Gr(n-r,n),E_{n-r}^\vee,-l}  \nonumber
\end{align}
\item For $l=n-r$, we have
\begin{align}
	   I_{T,d}^{Gr(r,n),E_{r},l}= \sum_{s=0}^d C_{s}( n-r,d)	I_{T,d-s}^{Gr(n-r,n),E_{n-r}^\vee,-l} \nonumber 
\end{align}	
where $C_{s}(k,d) $ is defined as

\begin{align*}
C_{s}(k,d)=\frac{(-1)^{k s}}{(q; q)_{s} q^{s(d-s+k)} \left(\bigwedge^{top}\mathcal{S}_{n-r} \right)^s  }	
\end{align*}
and $\mathcal{S}_{n-r}$ is the tautological bundle of $Gr(n-r,n)$
\item For $l=-r$, we have
\begin{align}
	 	I_{T,d}^{Gr(n-r,n),E_{n-r}^\vee,-l}= \sum_{s=0}^d D_{s}( r, d)	I_{T,d-s}^{Gr(r,n),E_{r},l} \nonumber
\end{align}
\begin{align*}
D_{s}(r,d)=\frac{(-1)^{rs}}{ (q;q)_sq^{s(d-s)} \left(\bigwedge^{top}\mathcal{S}_{r} \right)^s}
\end{align*}
and $\mathcal{S}_r$ is the tautological bundle of $Gr(r,n)$
\end{itemize}
here we use  q-Pochhammer symbol notation:
\begin{align*}
(a;q)_d : = \left\{ 
\begin{array}{rcl}
& (1-a)(1-qa)\cdots(1-q^{d-1}a)     & d>0 \\
&	1                               & d=0 \\
& \frac{1}{(1-q^{-1}a)\cdots(1-q^{-d}a)}   & d<0 
\end{array}
\right.
\end{align*}
\end{Theorem}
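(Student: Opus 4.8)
The plan is to reduce the stated $K$-theoretic identities in $K^{loc}_T(Gr(r,n))\otimes\mathbb{C}(q)$ to the purely combinatorial q-Pochhammer identities of Proposition~\ref{Thm-0} by means of equivariant localization. Both $I$-functions are elements of the localized torus-equivariant $K$-theory, and under the geometric isomorphism $Gr(r,n)\cong Gr(n-r,n)$ the identification $K^{loc}_T(Gr(r,n))\otimes\mathbb{C}(q)\cong K^{loc}_T(Gr(n-r,n))\otimes\mathbb{C}(q)$ is determined by restriction to the $T$-fixed points. The fixed points of $Gr(r,n)$ are indexed by $r$-subsets $P\subset\{1,\dots,n\}$, and duality matches $P$ with its complement $P^c$, an $(n-r)$-subset indexing a fixed point of $Gr(n-r,n)$. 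So the first step is to record how the Chern roots and the class $\bigwedge^{top}\mathcal{S}$ restrict: at $P$ one substitutes $L_i\mapsto\Lambda_{p_i}$, while on the dual side $\tilde{L}_j\mapsto\Lambda_{q_j}^{-1}$ for $q_j\in P^c$, reflecting $\mathcal{S}_{n-r}\cong\mathcal{Q}_r^\vee$ and $[\mathcal{S}_r]+[\mathcal{Q}_r]=[\mathbb{C}^n]$. It therefore suffices to verify the three relations after restriction to each fixed point, where they become identities of rational functions in $q$ and $\Lambda_1,\dots,\Lambda_n$.

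Second, I would rewrite each fixed-point restriction in q-Pochhammer form. Using the telescoping $\prod_{k=-\infty}^{m}(1-q^kx)/\prod_{k=-\infty}^{0}(1-q^kx)=(qx;q)_m$, the vandermonde factor becomes $\prod_{i,j}(qL_iL_j^{-1};q)_{d_i-d_j}$, the denominator becomes $\prod_i\prod_m 1/(qL_i\Lambda_m^{-1};q)_{d_i}$, and the level factor is the monomial $\prod_i(L_i^{d_i}q^{d_i(d_i-1)/2})^l$. After substituting the fixed-point values, $I_{T,d}^{Gr(r,n),E_r,l}|_P$ is exactly a finite sum over $d_1+\cdots+d_r=d$ of such products, and likewise $I_{T,d}^{Gr(n-r,n),E_{n-r}^\vee,-l}|_{P^c}$ is a sum over $e_1+\cdots+e_{n-r}=d$. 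This is precisely the shape to which Proposition~\ref{Thm-0} applies, with the equivariant parameters $\{\Lambda_m\}$ (and their ratios) playing the role of the extra parameters and $l$ playing the role of the range parameter.

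Third, I would apply Proposition~\ref{Thm-0}. In its iterated-residue proof each multi-sum is realized as an iterated residue of a single meromorphic function, and deforming the contours converts the difference of the two sides into residues at $0$ and $\infty$. The degree of the integrand in each residue variable is governed by $l$ together with $n$ and $r$, and for $1-r\le l\le n-r-1$ these boundary residues vanish identically, yielding $I_{T,d}^{Gr(r,n),E_r,l}=I_{T,d}^{Gr(n-r,n),E_{n-r}^\vee,-l}$. At the upper boundary $l=n-r$ exactly one extra residue survives; resumming it produces the convolution $\sum_{s=0}^d C_s(n-r,d)\,I_{T,d-s}^{Gr(n-r,n),E_{n-r}^\vee,-l}$, and the factor $(\bigwedge^{top}\mathcal{S}_{n-r})^{-s}$ in $C_s(n-r,d)$ arises from the uniform shift of the degree variables, i.e. from the determinant $\prod_j\tilde{L}_j$ extracted from the level monomial. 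The lower boundary $l=-r$ is handled symmetrically, swapping the roles of $r$ and $n-r$, to produce the coefficients $D_s(r,d)$.

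The main obstacle is not the localization bookkeeping but matching the two genuinely different multi-sums — one over $r$ parts, one over $n-r$ parts — through Proposition~\ref{Thm-0}, and tracking the boundary contributions with enough precision to reproduce the exact coefficients $C_s(n-r,d)$ and $D_s(r,d)$, including the signs $(-1)^{ks}$, the q-factorials $(q;q)_s$, the q-powers $q^{s(d-s+k)}$, and the powers of $\bigwedge^{top}\mathcal{S}$. I expect the careful identification of variables under duality, so that the extra parameters in Proposition~\ref{Thm-0} correspond correctly to the $\Lambda_m$ at every fixed point simultaneously, together with the evaluation of the residue at infinity at the two boundary levels, to be where the real work lies.
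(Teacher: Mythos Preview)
Your proposal is correct and follows essentially the same approach as the paper: reduce to fixed points via equivariant localization, identify the weights of $\mathcal{S}_r$ and $\mathcal{S}_{n-r}$ at matching fixed points $I\leftrightarrow I^\complement$, rewrite the restricted $I$-functions in q-Pochhammer form (the paper isolates this telescoping step as a separate lemma), and then invoke Proposition~\ref{Thm-0} to equate $A_d(\vec\Lambda,I,l)$ with $B_d(\vec\Lambda,I^\complement,-l)$.

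The one place you diverge from the paper is in handling the boundary levels $l=n-r$ and $l=-r$. You propose to compute the surviving residue at $\infty$ (respectively at $0$) directly in the iterated-residue integral and resum it to obtain the convolution with $C_s$ (respectively $D_s$). The paper instead proves its boundary Corollary by a limit trick: it applies the already-established interior identity with one extra parameter $x_{n+1}$ adjoined (so the level now sits strictly inside the enlarged admissible range) and then sends $x_{n+1}\to\infty$ for $l=n-r$, or $x_{n+1}\to 0$ for $l=-r$, extracting the coefficients $C_s$ and $D_s$ from the asymptotics of the extra q-Pochhammer factors. Both methods give the same answer; the paper's trick avoids reopening the residue machinery, while your direct approach makes the origin of the sign $(-1)^{ks}$, the factor $(q;q)_s^{-1}$, and the determinant $\bigwedge^{top}\mathcal{S}$ more transparent as coming from the leading behavior of the integrand at the boundary pole.
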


A key step in our proof is the following series of non-trivial $q$-Pochhammer symbol identities which are of independent interest.

Suppose that $r, n, d \in \mathbb{Z}_{>0}, l \in \mathbb{Z}$ and $0<r<n .$ Let $[n]$ be the set of elements $\{1, \ldots, n\}$, $I \subsetneq[n]$ be a $r$-element subset of $[n]$, $I^{\complement}$ be the complementary set of $I$ in $[n]$ and $\vec{d}_{I}$ be $|I|$-tuple of non negative integers. We denote $\sum_{i \in I} d_{i}$ as $\left|\vec{d}_{I}\right|$ and denote $x_{i} / x_{j}$ as $x_{i j}$ for simplicity.
\begin{Proposition}	\label{Thm-0}
	For variables $x_1,\cdots,x_n$, we define the following two expressions involving q-Pochhammer symbols
	\begin{align}
	A_d\left(  \vec{x}, I, l\right)=&\sum_{|\vec{d}_I|=d}\frac{\left( \prod_{i \in I}x_{i}^{d_i}q^{\frac{d_i(d_i-1)}{2}}\right)^l}{\prod_{i ,j \in I}\left( q^{d_{ij}+1}x_{ij};q \right)_{d j}\prod_{i \in I } \prod_{j \in I^\complement } (qx_{ij};q )_{d_i}}   \label{A_d} \\
	B_d\left(  \vec{x}, I, l\right)=&\sum_{|\vec{d}_{I}|=d}\frac{\left(\prod_{i \in I }x_{i}^{-d_i}q^{\frac{d_i(d_i+1)}{2}}\right)^l}{ \prod_{i , j \in I }\left( q^{d_{ij}+1}x_{ji};q \right)_{d j}\prod_{i \in I } \prod_{j \in I^\complement } (qx_{ji} ;q)_{d_i}} \label{B_d}
	\end{align}
If $l$ satisfies the condition
\begin{align}
1-|I| \leq l \leq n-|I|-1 \label{level-condition}	
\end{align}
we have the following identites
\begin{align}
	A_d\left(  \vec{x},I,l\right)= B_d\left(  \vec{x},I^\complement,-l\right)   \nonumber
\end{align}
\end{Proposition}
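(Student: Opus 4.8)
The plan is to read $A_d(\vec x, I, l)$ and $B_d(\vec x, I^{\complement}, -l)$ as the two localizations, at the common $T$-fixed point labelled by the subset $I$, of the Grassmannian and dual-Grassmannian $I$-function summands, and to prove their equality by the iterated residue method announced in the abstract. The starting observation, which fixes the whole set-up, is the substitution $w_i = x_i q^{d_i}$: under it the coupling factor becomes $(q^{d_{ij}+1}x_{ij};q)_{d_j} = (q\,w_i/w_j;q)_{d_j}$, the diagonal factor becomes $(q;q)_{d_i}$, and the mixed factor $(qx_{ij};q)_{d_i}$ (with $i\in I$, $j\in I^{\complement}$) becomes $(q^{1-d_i}w_i/x_j;q)_{d_i}$. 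Thus the whole denominator of the $A_d$-summand is the value at $w_i=x_iq^{d_i}$ of a fixed rational function of $w_1,\dots,w_{|I|}$, whose poles split into an inner family accumulating at the tautological points $x_i$ ($i\in I$) and an outer family accumulating at the equivariant points $x_j$ ($j\in I^{\complement}$). I would build from this a meromorphic kernel $K(\vec w)$ and a level weight $\prod_{i\in I} w_i^{\,l}$ so that the iterated residue of $\prod_{i\in I} w_i^{\,l}\,K(\vec w)\,\prod_{i\in I} dw_i/w_i$ at $w_i = x_i q^{d_i}$ reproduces the $A_d$-summand exactly, the Gaussian factor $q^{l d_i(d_i-1)/2}$ being precisely what the weight $w_i^{\,l}$ generates along a geometric progression of poles. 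I would calibrate all signs and $q$-powers on the one-variable case first.

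The engine is then the residue theorem applied one variable at a time. Collecting the residues at the inner family rebuilds $A_d(\vec x, I, l)$, whereas the residue theorem re-expresses the very same integral through the outer family; after the iteration this outer sum is $B_d(\vec x, I^{\complement}, -l)$ --- the level flips sign, the summation indices pass from $I$ to $I^{\complement}$, and the tautological-over-equivariant orientation of the ratios is preserved because taking residues on the other side exchanges numerator and denominator. Since the sum of all residues of a rational form on each $\mathbb{P}^1$ vanishes, the inner and outer collections can differ only by the contributions at $w_i=0$ and $w_i=\infty$. I expect the reconciliation of the $|I|$ summation indices of $A_d$ with the $|I^{\complement}|$ indices of $B_d$ to be carried out by an induction that, using a single one-variable $q$-Pochhammer residue identity, trades inner poles for outer poles one elementary step at a time, the full identity arising after iterating that step.

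The main obstacle, and the step that manufactures the level interval, is proving that the boundary contributions at $0$ and $\infty$ vanish. In each variable the integrand scales, in the two limits, like $w_i$ raised to a power that is an affine function of $l$ with slope $\pm 1$, the numerator supplying $w_i^{\,l}$ and the Pochhammer kernels supplying the rest. A careful degree count should show that the residue at infinity is absent exactly when $l \le n-|I|-1$ and the residue at $0$ is absent exactly when $l \ge 1-|I|$; together these are the hypothesis (\ref{level-condition}), and within this range the residue theorem yields $A_d(\vec x, I, l) = B_d(\vec x, I^{\complement}, -l)$. A reassuring consistency check is that at the two integers $l=n-|I|$ and $l=-|I|$ lying just outside the interval one boundary term switches on, and its iterated residues should reproduce exactly the correction series $\sum_s C_s$ and $\sum_s D_s$ appearing in the $l=n-r$ and $l=-r$ cases of the main Theorem; this is the strongest sign that the residue picture is the correct one. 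The genuinely delicate points to execute are the precise shape of the kernel $K$ so that all signs and $q$-powers match, the avoidance of spurious poles when the iterated residues are taken in a fixed order, and the uniform degree estimate at the two endpoints.
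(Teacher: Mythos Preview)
Your overall philosophy --- represent both sides as iterated residues of a single rational kernel, flip the contour, and use the level bounds to kill the contributions at $0$ and $\infty$ --- matches the paper exactly, and your identification of where the two inequalities $1-|I|\le l$ and $l\le n-|I|-1$ come from is correct. The gap is in the shape of the kernel.

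You set up an $|I|$-variable integral, one variable $w_i$ per $i\in I$, with the intention of taking a residue at $w_i = x_i q^{d_i}$. This cannot produce the $A_d$-summand for two reasons. First, the Gaussian $q^{\,l d_i(d_i-1)/2}$ does not arise from evaluating $w_i^{\,l}$ at a single point $x_iq^{d_i}$ (that gives $x_i^{\,l}q^{\,ld_i}$); it arises only as the product $\prod_{k=0}^{d_i-1}(x_iq^k)^l$ collected along a \emph{chain} of $d_i$ residues, which requires $d_i$ integration variables per $i$, hence $d$ variables in total. Second, and more seriously, an $|I|$-variable integral cannot yield $B_d(\vec x,I^\complement,-l)$ on the outer side: $B_d$ is a sum over compositions of $d$ with $|I^\complement|$ parts, and there is no way to index these by the outer residues of $|I|$ variables. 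Your remark that the reconciliation ``should be carried out by an induction'' papers over this mismatch rather than resolving it; the asymmetry between $|I|$ and $|I^\complement|$ is built into your kernel and cannot be undone after the fact.

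The paper's construction fixes both problems at once by using $d$ \emph{symmetric} integration variables $w_1,\dots,w_d$ and inserting the Nekrasov-type interaction factor
\[
\frac{1}{(1-q)^d\,d!}\prod_{a\neq b}\frac{w_a-w_b}{w_a-qw_b}
\]
alongside $\prod_a w_a^{\,l-1}\big/\bigl(\prod_{j\in I}(1-x_j/w_a)\prod_{j\in I^\complement}(1-qw_a/x_j)\bigr)$. The interaction factor forces the iterated inner residues to organize themselves into chains $(x_i,qx_i,\dots,q^{d_i-1}x_i)$, one chain per $i\in I$, and the product of $w^{\,l}$ along a chain is exactly the Gaussian. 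Because the kernel is symmetric in all $d$ variables, the outer residues organize into chains $(q^{-1}x_j,\dots,q^{-d_j}x_j)$ for $j\in I^\complement$ on exactly the same footing, and $B_d$ drops out with no index-matching gymnastics. Once you have this kernel, your degree count at $0$ and $\infty$ goes through verbatim and produces the interval \eqref{level-condition}. The missing idea, then, is not the residue strategy but the specific $d$-variable symmetric kernel with the $\prod(w_a-w_b)/(w_a-qw_b)$ factor; without it the construction does not close.
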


This paper is arranged as follows. In subsection \ref{sec:2} , we prove Proposition \ref{Thm-0} by constructing a rational function (\ref{f}) and then using iterated residue method which is useful in Nekrosov partition function \cite{felder2018analyticity}. In the following subsection \ref{sec:3}, we provide two explicit examples to explain the proof and also provide a non-trivial identity by using Proposition \ref{Thm-0}. In subsection \ref{sec:4}, we expand the restriction to the boundary, i.e. $l=-|I|$ and $l=n-|I|$. In section \ref{sec:5}, we first revisit $K$-theoretic quasi-map theory in which we review some basic definitions and theorems, especially, the formula of equivariant $I$-function of Grassmannian $Gr(r,n)$, finally in subsection \ref{sec:7}, we apply Proposition \ref{Thm-0} to obtain the level correspondence of $I$-function in Grassmann duality. 

\section*{Acknowledgements}
We would like to thank Prof. Yongbin Ruan for suggesting this problem and useful discussions. Thanks are also due to Prof. Yutaka Yoshida for showing us his formula of $I$-function of Grassmannian and telling us something about Chern-Simons-matter thoery in physics. We would also like to thank Ming Zhang for discussion about level structures, and Zijun Zhou for helpful discussions. We also like to appreciate the hospitality of IASM (Institute For Advanced Study In Mathematics) of Zhejiang University.

\section{The class of q-Pochhammer symbol identities}
\label{sec:1}
\subsection{The proof of identities}
\label{sec:2}
Now we prove this proposition for one case $I=\{1,\cdots,r\}$ by constructing the following symmetric complex rational function $f(w_{1},\cdots ,w_{d})$ with parameters $q$ and $x_1,\cdots ,x_n$, we made following assumptions of parameters
\begin{align}
	&|q|<1 \nonumber
	\\
	x_i x_j^{-1} \neq q^k &\qquad \forall i\neq j \in [n],\forall k\in \mathbb{Z} \label{simPoleCond}
	\end{align}
	furthermore there exists some $\rho>0$ such that 
	\begin{align}
	\max_{i\in [n]} |x_i|<\rho <\min_{i\in [n]} |q|^{-1}|x_i| \label{conditionforq}
	\end{align} 
where $[n]:=\{1, \cdots, n \}$ and general situations follow from analytic continuation. Let $f(w_{1},\cdots ,w_{d})$ be as follows
	\begin{align}
	f(w_{1},\cdots ,w_{d}) &=\frac{1}{(1-q)^d d!}\prod_{i \neq j}^d \frac{w_{i}-w_{j}}{w_{i}-qw_{j}}\prod_{i=1}^d \frac{w_i^{l-1}}{\prod_{j=1}^{r}(1-x_j/w_{i}) \prod_{j=r+1}^{n}(1-qw_{i}/x_{j})} \label{f} \\
	&=g(w_{1},\cdots,w_{d})\prod_{i=1}^{d}\left(\prod_{u \in U}\frac{w_i - q^{-1}u}{w_i - u}\prod_{i<j}\frac{(w_i-w_j)^2}{(w_i-qw_j)(qw_i - w_j)}\right) \label{recursiveFormOfF}
	\end{align}
	where $U$ is a  set of complex numbers all contained in open disk $|w|<\rho$, at the moment $U = \{x_1,\cdots,x_r\}$ and $g$ is a symmetric function of the form
	\begin{align}
	g(\vec{w})=\frac{1}{(1-q)^d d!}\prod_{i=1}^d \frac{w_i^{l+r-1}}{ \prod_{j=1}^{r}(w_i - q^{-1}x_j)\prod_{j=r+1}^{n}(1-qw_{i}/x_{j})} \nonumber
	\end{align}
	from condition (\ref{conditionforq}) and the restriction of $l$ we know $g$ is analytical in the polydiscs $\{(w_1,\cdots,w_n):|w_i| \le \rho, \forall i \in [n]\}$ and $g$ can only have possible zeros for some $w_j = 0$.
	
	We conside the following integration
	\begin{align}
	E_d:=&\int_{C_{\rho}} \frac{dw_{d}}{2\pi\sqrt{-1}}  \ldots \int_{C_{\rho}}  \frac{dw_{1}}{2\pi\sqrt{-1}} f(w_{1},\cdots,w_{d})  \label{integration}
	\end{align}
	where 
	the integration contour $C_{\rho}$ for each variable $w_i$ is the circle centered at origin with radius $\rho$ and takes counter-clockwise direction. The condition (\ref{conditionforq}) ensures that there isn't a pole on the integration contour. By Fubini's theorem, we could permute the order of integration variables and $f(w_{1},\cdots,w_{d})$ is a symmetric function, we can change $(w_{1},\cdots, w_{d})$ to other order.
	
	 In order to get explicit expression of $E_d$, we take take the residues in each variable consecutively inside the integration contour, suppose we have the following evaluating sequence for some $S_1 \le d$ by induction,
	\begin{align}\label{evaluateProcess1}
	\hat{w}_1 = q{w}_2, \hat{w}_2 = q{w}_3, \cdots, \hat{w}_{S_1-1} = q{w}_{S_1}\nonumber
	\end{align}
	which are all simple poles inside $|w|<\rho $, then we have
	\begin{align}
	&\underset{\hat{w}_{S_1-1} = q {w}_{S_1}}{\operatorname{Res}} \cdots \underset{\hat{w}_2 = q {w}_3}{\operatorname{Res}}\underset{\hat{w}_1 = q {w}_2}{\operatorname{Res}}f   \nonumber
	\\=& \prod_{i=S_{1}+1}^{d}\left(\prod_{u \in U}\frac{{w}_i - q^{-1}u}{{w}_i - u}\prod_{i<j}\frac{({w}_i-{w}_j)^2}{({w}_i-q{w}_j)(q{w}_i - {w}_j)}\right) \nonumber
	\\ &\cdot {w}_{S_1}^{S_{1}-1} \prod_{k=0}^{S_{1}-1}\prod_{u \in U}\frac{q^{k}{w}_{S_1} - q^{-1}u}{q^{k}{w}_{S_1} - u} \cdot \prod_{S_1<j}\frac{({w}_{S_1}-{w}_j)(q^{S_{1}-1}{w}_{S_{1}}-{w}_j)}{({w}_{S_1}-q{w}_j)(q^{S_1}{w}_{S_1} - {w}_j)}  \nonumber
	\\
	&\cdot \frac{(q-1)^{S_{1}}}{q^{S_{1}}-1} q^{-({S_{1}-1})(d-{S_{1}})}g(q^{{S_{1}-1}}{w}_{S_{1}},q^{{S_{1}}-2}{w}_{S_{1}},\cdots,{w}_{S_1},{w}_{S_1+1},\cdots,{w}_d)
	\end{align}
	now integrating variable ${w}_{S_1}$, we pick up pole at $\hat{w}_{S_1}  = q^{-k_1}u_1$ for some $0\le k_1< S_1$ and $u_1 \in U=\{x_1,\cdots,x_r\}$. Due to the condition (\ref{conditionforq}),  $|\hat{w}_{S_1}|< \rho$ implies that $k_1=0$. Evaluating at $\hat{w}_{S_1}  = u_1$, we get
	\begin{align}
	& \underset{\hat{w}_{S_1} = u_1}{\operatorname{Res}} \,\underset{\hat{w}_{S_1-1} = q {w}_{S_1}}{\operatorname{Res}} \cdots \underset{\hat{w}_2 = q {w}_3}{\operatorname{Res}} \, \underset{\hat{w}_1 = q {w}_2}{\operatorname{Res}}f   \nonumber
	\\=& \prod_{i=S_{1}+1}^{d}\left( \frac{{w}_i-q^{S_1-1}u_1}{{w}_i-q^{S_1}u_1}\prod_{u \in U\backslash\{u_1\}}\frac{{w}_i - q^{-1}u}{{w}_i - u}\prod_{i<j}\frac{({w}_i-{w}_j)^2}{({w}_i-q{w}_j)(q{w}_i - {w}_j)}\right) \nonumber
	\\ &\cdot u_{1}^{S_1}\prod_{ k =  0}^{S_1 - 1} \prod_{u \in U \backslash \{u_1\} } \frac{q^k u_1-q^{-1}u}{q^k u_1-u}  \cdot  (q-1)^{S_1} q^{S_1 (S_1 -1 - d)} \nonumber 
	\\ &\cdot g(q^{{S_{1}-1}}u_1,q^{{S_{1}}-2}u_1,\cdots,qu_1,u_1,{w}_{S_1+1},\cdots,{w}_d)  \label{firstmethod}
	\\=& \tilde{g}({w}_{S_1+1},\cdots,{w}_{d})\prod_{i=S_1+1}^{d}\left(\prod_{u \in \tilde{U}}\frac{{w}_i - q^{-1}u}{{w}_i - u}\prod_{i<j}\frac{({w}_i-{w}_j)^2}{({w}_i-q{w}_j)(q{w}_i - {w}_j)}\right) \label{remain}
	\end{align}
	where 
	\begin{align}
	\tilde{U} =& U \backslash \{u_1\} \cup \{q^{S_1}u_1\}
	\end{align}
	all elements of $\tilde{U}$ are still all in the open disk $|w|<\rho$, and  
	\begin{align}
	\tilde{g}({w}_{S_1+1},\cdots,{w}_{d}) =& u_{1}^{S_1}\prod_{ k =  0}^{S_1 - 1} \prod_{u \in U \backslash \{u_1\} } \frac{q^k u_1-q^{-1}u}{q^k u_1-u}  \cdot  (q-1)^{S_1} q^{S_1 (S_1 -1 - d)} \nonumber \\ & \cdot g(q^{{S_{1}-1}}u_1,q^{{S_{1}}-2}u_1,\cdots,qu_1,u_1,{w}_{S_1+1},\cdots,{w}_d)
	\end{align}
	so we just write $\tilde{f} :=  \underset{\hat{w}_{S_1} = u}{\operatorname{Res}} \,\underset{\hat{w}_{S_1-1} = q {w}_{S_1}}{\operatorname{Res}} \cdots \underset{\hat{w}_2 = q {w}_3}{\operatorname{Res}} \, \underset{\hat{w}_1 = q {w}_2}{\operatorname{Res}}f$ into the same pattern as in the original form (\ref{recursiveFormOfF}). One could check that setting $S_1 = 1$ in equation (\ref{firstmethod}) is valid. 
	
	If one takes the following evaluation sequence of simple poles by induction
	\begin{align}
	\hat{w}_1 = u_1, \hat{w}_2 = q u_1, \cdots, \hat{w}_{S_1-1} = q^{S_1-2}u_1,\hat{w}_{S_1} = q^{S_1-1} u_1
	\end{align}
	we get
	\begin{align}
	&\underset{\hat{w}_{S_1} = q^{S_1-1}u_1}{\operatorname{Res}} \cdots \underset{\hat{w}_{2} = qu_1}{\operatorname{Res}}\underset{\hat{w}_{1} = u_1}{\operatorname{Res}}f 
	= \prod_{S_1<i}^{d}\left( \frac{{w}_i - q^{S_1-1} u_1}{{w}_i-  q^{S_1} u_1}\prod_{u \in U\backslash\{u_1\}}\frac{{w}_i - q^{-1}u}{{w}_i - u}\prod_{i<j}\frac{({w}_i-{w}_j)^2}{({w}_i-q{w}_j)(q{w}_i - {w}_j)}\right) \nonumber
	\\ & \cdot u_{1}^{S_1}\prod_{ k =  0}^{S_1 - 1} \prod_{u \in U \backslash \{u_1\} } \frac{q^k u_1-q^{-1}u}{q^k u_1-u}  \cdot  (q-1)^{S_1} q^{S_1 (S_1 -1 - d)} g(u_1,qu_1,\cdots,q^{S_1 - 1}{w}_{S_1}, {w}_{S_1 + 1},\cdots {w}_d)  \label{u1}
	\end{align}
	which agrees with the equation (\ref{firstmethod}), since $g$ is a symmetric function. That is to say, we get the same results from two different evaluation sequences
	\begin{align} \label{onebyoneVSall}
	\underset{\hat{w}_{S_1} = q^{S_1-1}u_1}{\operatorname{Res}} \cdots \underset{\hat{w}_{2} = qu_1}{\operatorname{Res}}\underset{\hat{w}_{1} = u_1}{\operatorname{Res}}f   = \underset{\hat{w}_{S_1} = u_1}{\operatorname{Res}} \,\underset{\hat{w}_{S_1-1} = q {w}_{S_1}}{\operatorname{Res}} \cdots \underset{\hat{w}_2 = q {w}_3}{\operatorname{Res}} \, \underset{\hat{w}_1 = q {w}_2}{\operatorname{Res}}f  
	\end{align} 
			
	As the evaluation process for sequence (\ref{evaluateProcess1}), we now picking up residues of $\tilde{f}$ in the following sequence
	\begin{align}
	\hat{w}_{S_{1}+1} = q {w}_{S_1 +2} \quad \hat{w}_{S_{1}+2} = q {w}_{S_1 +3}\quad \cdots \quad \hat{w}_{S_{1}+S_{2}-1} = q {w}_{S_1 +S_2}
	\end{align}
	Suppose $\hat{w}_{S_1+S_2}=u_2$, we have two cases here, i.e. 
	$u_2 \ne q^{S_1} u_1$ or $u_2 = q^{S_1} u_1$. By a little bit of computation, we obtain \\
	
	{\bf{Case 1}}: $u_2 \ne q^{S_1} u_1$,
	\begin{align}\label{udifferent}
	&\underset{\hat{w}_{S_1+S_2} = u_2}{\operatorname{Res}} \, \underset{\hat{w}_{S_1+S_2-1} = q {w}_{S_1+S_2}}{\operatorname{Res}} \cdots \underset{\hat{w}_{S_{1}+2} = q {w}_{S_{1}+3}}{\operatorname{Res}}\,\underset{\hat{w}_{S_{1}+1} = q {w}_{S_{1}+2}}{\operatorname{Res}} \, \underset{\hat{w}_{S_1} = u_1}{\operatorname{Res}} \,\underset{\hat{w}_{S_1-1} = q {w}_{S_1}}{\operatorname{Res}} \cdots \underset{\hat{w}_2 = q {w}_3}{\operatorname{Res}} \, \underset{\hat{w}_1 = q {w}_2}{\operatorname{Res}}f \nonumber
	\\ =&\underset{\hat{w}_{S_1+S_2} = u_1}{\operatorname{Res}} \, \underset{\hat{w}_{S_1+S_2-1} = q {w}_{S_1+S_2}}{\operatorname{Res}} \cdots \underset{\hat{w}_{S_{2}+2} = q {w}_{S_{2}+3}}{\operatorname{Res}}\,\underset{\hat{w}_{S_{2}+1} = q {w}_{S_{2}+2}}{\operatorname{Res}} \, \underset{\hat{w}_{S_2} = u_2}{\operatorname{Res}} \,\underset{\hat{w}_{S_2-1} = q {w}_{S_2}}{\operatorname{Res}} \cdots \underset{\hat{w}_2 = q {w}_3}{\operatorname{Res}} \, \underset{\hat{w}_1 = q {w}_2}{\operatorname{Res}}f 
	\end{align}

	{\bf{Case 2}}: $u_2 = q^{S_1} u_1$, 
	\begin{align}\label{usame}
	&\underset{\hat{w}_{S_1+S_2} = q^{S_1} u_1}{\operatorname{Res}} \, \underset{\hat{w}_{S_1+S_2-1} = q {w}_{S_1+S_2}}{\operatorname{Res}} \cdots \underset{\hat{w}_{S_{1}+2} = q {w}_{S_{1}+3}}{\operatorname{Res}}\,\underset{\hat{w}_{S_{1}+1} = q {w}_{S_{1}+2}}{\operatorname{Res}} \, \underset{\hat{w}_{S_1} = u_1}{\operatorname{Res}} \,\underset{\hat{w}_{S_1-1} = q {w}_{S_1}}{\operatorname{Res}} \cdots \underset{\hat{w}_2 = q {w}_3}{\operatorname{Res}} \, \underset{\hat{w}_1 = q {w}_2}{\operatorname{Res}}f \nonumber
	\\ =& \underset{\hat{w}_{S_1+S_2} = q^{S_1 + S_2 -1} u_1}{\operatorname{Res}} \, \underset{\hat{w}_{S_1+S_2-1} = q^{S_1 + S_2 -2} u_1}{\operatorname{Res}} \cdots \underset{\hat{w}_{S_{1}+2} = q^{S_1 + 1} u_1}{\operatorname{Res}}\,\underset{\hat{w}_{S_{1}+1} = q^{S_1} u_1}{\operatorname{Res}} \, \underset{\hat{w}_{S_1} = q^{S_1 -1}u}{\operatorname{Res}} \cdots \underset{\hat{w}_2 = q u_1}{\operatorname{Res}} \, \underset{\hat{w}_1 = u_1}{\operatorname{Res}}f
	\end{align}
	
	To summarize all above, we can repeat using above arguments to integrating all variables for the integrand of the form as in (\ref{recursiveFormOfF}) with one variable less each time. 
	
	When there is only one variable left
\begin{align}
	f(w) = g(w)\prod_{u \in U} \frac{w-q^{-1}u}{w - u}
\end{align}
we still update the set $U$ to $U\backslash\{u\} \cup \{qu\}$ after choosing pole at $\hat{w} = u \in U$. Using same argument to get (\ref{udifferent}) and (\ref{usame}), after picking up poles for all $w_i,  i \in [d]$, the results only depends on the final set $U$, and final set $U$ must be of the form
\begin{align}
	\{q^{d_1} x_1, \cdots , q^{d_r}x_r\}
\end{align}
where $d_1 + \cdots + d_r = d$, which means for each sequence, the final result can be indexed by a $r$-tuple partition of $d$.

Suppose there is a sequence with final set $\{q^{d_1} x_1, \cdots , q^{d_r}x_r\}$, then we can compute the result by following sequence
		\begin{align}
		(\hat{w}_1,\cdots,\hat{w}_d) = (x_1,qx_1\cdots,q^{d_1-1}x_1,x_2\cdots,x_r,\cdots qx_r,q^{d_r-1}x_r)
		\end{align}
		and note that we can actually do permutation on the left side, so for each partition $|\vec{d}| = d$, we have $d!$ possible evaluation sequences.
	
In all we get following lemma to compute $E_d$.
\begin{Lemma}{\label{iterated-residue}}
	We can write $E$ as 
\begin{align}
	E_d =\sum_{|\vec{d}|=d}d!E_{\vec{d}}		 \label{summand-d}	
\end{align}
where
\begin{align}
	E_{\vec{d}}= \lim_{w_{d}\rightarrow \hat{w}_{d} }\cdots\lim_{w_{1}\rightarrow \hat{w}_{1} }\left(\prod_{i=1}^{n}(w_i-\hat{w}_{i})f(\vec{w})\right )
\end{align}
here 
\begin{align}
	(\hat{w}_{1},\dots,\hat{w}_{d})=(x_1,qx_1,\dots,q^{d_1-1}x_1,x_2,q x_2,\dots,q^{d_2-1}x_2, \dots,x_r,\dots,q^{d_r -1}x_r) \nonumber
\end{align}
and the order to take limit is from $w_1$ to $w_d$.
\end{Lemma}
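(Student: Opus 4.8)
The plan is to evaluate $E_d$ by iterated contour integration, one variable at a time, and to classify the residue contributions by the final set $U$ produced at the end of the recursion. Since $f$ is symmetric and, by (\ref{conditionforq}), has no poles on any contour $C_\rho$, Fubini's theorem lets me integrate $w_1, \ldots, w_d$ consecutively and, at each step, replace the integral by the sum of residues at the poles lying strictly inside $|w| < \rho$. In the recursive form (\ref{recursiveFormOfF}) the only such poles for the variable being integrated are the \emph{fresh} poles $w_i = u$ with $u$ in the current set $U$ and the \emph{chained} poles $w_i = qw_j$ (the companion factor $qw_i - w_j$ contributes a pole $w_i = w_j/q$ that lies outside the disk); condition (\ref{conditionforq}) keeps $w_i = qw_j$ inside, while the simple-pole condition (\ref{simPoleCond}) together with the numerator factors $w_i - w_j$ ensures every pole encountered is simple, so each residue is exactly the plain limit appearing in the statement.

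Next I would invoke the two elementary computations already recorded, (\ref{udifferent}) and (\ref{usame}), to show that the value of a complete residue sequence depends only on its final set $U$, and not on the order in which the poles are chosen. After a fresh pole $u$ is picked, the analysis preceding the Lemma shows that $U$ updates to $U\setminus\{u\}\cup\{qu\}$ while $f$ retains the form (\ref{recursiveFormOfF}); iterating over all $d$ variables, every sequence terminates with a final set $\{q^{d_1}x_1, \ldots, q^{d_r}x_r\}$ with $d_1 + \cdots + d_r = d$. Hence the contributions are indexed by $r$-tuples $\vec d$ with $|\vec d| = d$, and by order-independence each index contributes the single value $E_{\vec d}$ computed along the canonical chain $(\hat w_1, \ldots, \hat w_d) = (x_1, qx_1, \ldots, q^{d_1-1}x_1, \ldots, x_r, \ldots, q^{d_r-1}x_r)$, that is, $E_{\vec d} = \lim_{w_d\to\hat w_d}\cdots\lim_{w_1\to\hat w_1}(\prod_i(w_i-\hat w_i) f)$.

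It then remains to count multiplicities. Because $f$ is symmetric, the $d!$ ways of matching the integration variables $w_1, \ldots, w_d$ to the $d$ poles $\{q^{k}x_j : 1\le j\le r,\, 0\le k\le d_j-1\}$ visited for a fixed $\vec d$ produce $d!$ distinct residue sequences sharing the same final set, hence the same value $E_{\vec d}$; summing over all $\vec d$ yields $E_d = \sum_{|\vec d| = d} d!\, E_{\vec d}$, which is the assertion of the Lemma.

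I expect the main obstacle to be the order-independence together with the exact factor $d!$: one must verify that the two moves (\ref{udifferent}) and (\ref{usame}) connect any two residue sequences with a common final set, so that no configuration is double-counted or omitted, and that the chained poles never escape $|w| < \rho$ nor collide to create a higher-order pole while the towers rooted at distinct $x_j$ never overlap. All of this rests on the conditions (\ref{simPoleCond}) and (\ref{conditionforq}) and on the vanishing of the numerator factors $w_i - w_j$, which suppresses the configurations where two variables would occupy the same pole.
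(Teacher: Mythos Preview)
Your proposal is correct and follows essentially the same approach as the paper: iterated residues inside $|w|<\rho$, the fresh/chained pole dichotomy, the update $U\mapsto U\setminus\{u\}\cup\{qu\}$, order-independence via (\ref{onebyoneVSall}), (\ref{udifferent}), (\ref{usame}), and the final $d!$ count from permuting the canonical assignment. You have also correctly flagged the genuinely delicate point---that the moves suffice to show \emph{every} branch of the residue tree is in bijection with one of the $d!$ permutations of the canonical sequence, so the count is exact rather than merely a lower bound---which the paper itself leaves somewhat implicit.
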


We now evaluate one specific configuration of these simple pole residues for given $\vec d$ by changing of variables:
\begin{align}
w^{i}_{n_i}=x_i q^{n_i - 1}z_{n_i}^{i} \quad\quad i=1,\dots,r\quad n_i=1,\dots,d_i \nonumber
\end{align}
{\bf{Notations:}} From now on, we would frequently use the following notations
\begin{align}
x_{ij} := x_i / x_j \ \ \ \ \ n_{ij} := n_i-n_j	
\end{align}

so
\begin{align}
f(\vec{w})=& \frac{1}{(1-q)^{d} d!\prod_{i,n_i}z^i_{n_i}}\cdot \displaystyle \prod_{i=1}^{r}\prod_{n_i\neq n_j}^{d_i}           \frac{1-q^{n_{ij}} z_{n_i}^{i}/z_{n_j}^{i}}{1-q^{n_{ij}+1} z_{n_i}^{i}/z_{n_j}^{i}} \nonumber \\
& \times \prod_{i,j=1|i\neq j}^{r}\prod_{n_i=1}^{d_i}\prod_{n_j=1}^{d_j}\frac{1-q^{n_{ij}} z_{n_i}^{i}/z_{n_j}^{j}x_{ij}}{1-q^{n_{ij}+1} z_{n_i}^{i}/z_{n_j}^{j}x_{ij}} \nonumber \\ 
&\times  \frac{\prod_{i}^{r}\prod_{n_i=1}^{d_i}(x_i q^{n_i - 1}z_{n_i}^{i})^{l-1}}{ \prod_{i,j=1|i\neq j}^{r}\prod_{n_i=1}^{d_i} (1-x_{ji}q^{1-n_i}/z^{i}_{n_i})} \nonumber \\
&\cdot \frac{1}{ \prod_{i=1}^{r}\prod_{n_i=1}^{d_i} (1-q^{1-n_i}/z^{i}_{n_i})} \cdot \frac{1}{ \prod_{i=1}^{r}\prod_{j=r+1}^{n}\prod_{n_i=1}^{d_i} (1-x_{ij}q^{n_i}z^{i}_{n_i})} \nonumber
\end{align}
now obtain grid of the simple pole terms and evaluate the function with $z_i=1$, note that
\begin{align*}
	\lim_{z^i_{d_i}\rightarrow 1 }\cdots\lim_{z^i_{1}\rightarrow 1 }\left(\prod_{n_i=1}^{d_i}(z_{n_i}^{i}  - 1 ) \cdot \frac{1}{\left(1-(z^i_{1})^{-1}\right)\left(1-z^i_{1}/z^i_2\right)\ldots \left(1-z^i_{d_i-1}/z^i_{d_i}\right) z^i_1 \cdots z^i_{d_i}}\right ) =1 \nonumber
\end{align*}
where the order to take limits is from $z_1^i$ to $z_{d_i}^i$. So this specific configuration of residues is 
\begin{align}
E_{\vec{d}}=&\frac{1}{(1-q)^{d} d!}\cdot \displaystyle \prod_{i=1}^{r}\left(\prod_{n_i\neq n_j|n_{ij}\neq -1}^{d_i} \frac{1-q^{n_{ij}} }{1-q^{n_{ij}+1} } \cdot \prod_{n_i=2}^{d_i}\frac{1-q^{-1}}{1-q^{1-n_i}} \right)\cdot \nonumber \\
&\times \prod_{i,j=1|i\neq j}^{r}\prod_{n_i=1}^{d_i}\prod_{n_j=1}^{d_j}\frac{1-q^{n_{ij}} x_{ij}}{1-q^{n_{ij}+1} x_{ij}} \nonumber \\ 
&\times  \frac{\prod_{i}^{r}\prod_{n_i=1}^{d_i}(x_i q^{n_i - 1})^{l}}{ \prod_{i,j=1|i\neq j}^{r}\prod_{n_i=1}^{d_i} (1-x_{ji}q^{1-n_i})}
\cdot \frac{1}{ \prod_{i=1}^{r}\prod_{j=r+1}^{n}\prod_{n_i=1}^{d_i} (1-x_{ij}q^{n_i})} \label{E_d}
\end{align}
and factors in $E_{\vec{d}}$ only with $x_{ij}$ with $i=1,\ldots,r$ and $j=r+1,\ldots,n$ are 
\begin{align}
E^{1}_{\vec{d}}:=\frac{1}{ \prod_{i=1}^{r}\prod_{j=r+1}^{n}\prod_{n_i=1}^{d_i} (1-x_{ij}q^{n_i})}=\frac{1}{\prod_{i=1}^{r}\prod_{j=r+1}^{n}(qx_{ij};q)_{d_i}} \nonumber
\end{align}
the factors in $E_{\vec{d}}$ not containing any $x_{ij}$ are
\begin{align}
E_{\vec{d}}^2:=\frac{1}{(1-q)^{d}}\cdot \displaystyle \prod_{i=1}^{r}\left(\prod_{n_i\neq n_j|n_{ij}\neq -1}^{d_i}           \frac{1-q^{n_{ij}} }{1-q^{n_{ij}+1} } \cdot \prod_{n_i=2}^{d_i}\frac{1-q^{-1}}{1-q^{1-n_i}} \right) \nonumber
\end{align}

Define $P_d$ as
\begin{align*}
P_d: = \left\{ 
\begin{array}{rcl}
& \displaystyle \prod_{i\neq j|i-j\neq -1}^{d} \frac{1-q^{i-j} }{1-q^{i-j+1} } \cdot \prod_{i=2}^{d_i}\frac{1-q^{-1}}{1-q^{1-i}}  \quad\quad   & d>1 \\
&	1                               & d=0,1
\end{array}
\right.
\end{align*}
by simple induction, we obtain
\begin{align}
\frac{P_d}{(1-q)^d}=\frac{1}{(q;q)_d} \quad\quad\quad d\geq 0 \nonumber
\end{align}
and 
\begin{align}
E_{\vec{d}}^2=\prod_{i=1}^{r}\frac{P_{d_i}}{(1-q)^{d_i}}=\prod_{i=1}^{r}\frac{1}{(q;q)_{d_i} }\nonumber
\end{align}
Factors left in (\ref{E_d}) are 
\begin{align}
E_{\vec{d}}^3:=& \left( \prod_{i,j=1|i\neq j}^{r}\prod_{n_i=1}^{d_i}\prod_{n_j=1}^{d_j}\frac{1-q^{n_{ij}} x_{ij}}{1-q^{n_{ij}+1} x_{ij}} \right) \frac{\prod_{i=1}^{r}\prod_{n_i=1}^{d_i}(x_i q^{n_i - 1})^l}{ \prod_{i,j=1|i\neq j}^{r}\prod_{n_i=1}^{d_i} (1-x_{ji}q^{1-n_i})}  \nonumber \\
=&\prod_{i\neq j}^{r}\prod_{n_j=1}^{d_j}\left( \left(\prod_{n_i=1}^{d_i} \frac{1-q^{n_{ij}} x_{ij}}{1-q^{n_{ij}+1} x_{ij}} \right) \cdot \frac{1}{1-x_{ij}q^{1-n_j}}\right)  \cdot \prod_{i=1}^{r}x_{i}^{ld_i}q^{\frac{ld_i(d_i-1)}{2}} \nonumber \\
=&\prod_{i\neq j}^{r}\prod_{n_j=1}^{d_j}\frac{1}{1-q^{d_i-n_j+1}x_{ij}} \cdot \prod_{i=1}^{r}x_{i}^{ld_i}q^{\frac{ld_i(d_i-1)}{2}}  \nonumber \\
=&\prod_{i=1}^{r}x_{i}^{ld_i}q^{\frac{ld_i(d_i-1)}{2}}\cdot \prod_{i\neq j}^{r}\frac{1}{(q^{d_{ij}+1};q)_{d_j}} \nonumber
\end{align}
where we swap indices $i$ and $j$ in factor $(1-x_{ji}q^{1-n_i})$. Note that $(q^{d_{ij}+1}x_{ij},q)_{d_j}=(q;q)_{d_i}$ when $i=j$. In all, we have
\begin{align*}
d!E_{\vec{d}}&=d!E_{\vec{d}}^1E_{\vec{d}}^2E_{\vec{d}}^3	 \\
             &=\frac{\prod_{i=1}^rx_{i}^{ld_i}q^{\frac{ld_i(d_i-1)}{2}}}{\prod_{i,j=1}^r(q^{d_{ij}+1}x_{ij};q)_{d_j}\prod_{i=r}^r\prod_{j=r+1}^{n}(qx_{ij};q)_{d_i}}
\end{align*}
comparing $d!E_{\vec{d}}$ with definition of $A_{d}(\vec{x},I,l)$ in (\ref{A_d}), we have

The above equations prove that the summand in (\ref{summand-d}) corresponding to given $\vec d$ equals to one summand in $A_{d}\left(\vec{x},I,l\right)$, thus, we have
\begin{align}
A_{d}\left(\vec{x},I,l\right) = \sum_{|\vec{d}|=d}d!E_{\vec{d}}	=E_d \nonumber
\end{align}
where $I=\{1,2,\cdots,r \}$.

On the other hand, we consider the integration
\begin{align}
F_d :=\int_{C^{\prime}_{\rho}} \frac{dw_{i_d}}{2\pi\sqrt{-1}}  \ldots \int_{C^{\prime}_{\rho}}  \frac{dw_{i_1}}{2\pi\sqrt{-1}} f(w_{i_1},\cdots,w_{i_d}) \label{integration-reverse}	
\end{align}
where the integration contour for each variable $w_i$ is still $|w_i|=\rho $, but, with clockwise direction. By definition, we can compute this integration by taking iterated residues outside $w=\rho $. The difference of degree of denominator and that of numerator for each variable $w_i$ is $n-l+1-r \geq 2 $ which is guaranteed by condition (\ref{level-condition}), so residue for each variable at infinity is 0.

The iterated residues in this case are similar to the previous counter-clockwise direction. Similar arguments as (\ref{iterated-residue}) shows
\begin{align}
	F_d=\sum_{|\vec{d^\prime}|=d}d!F_{\vec{d^\prime}}	 \nonumber
\end{align}
where
\begin{align}
    F_{\vec{d^\prime}}= \lim_{w_{d}\rightarrow \hat{w}_{d} }\cdots\lim_{w_{1}\rightarrow \hat{w}_{1} }\left(\prod_{i=1}^{n}(w_i-\hat{w}_{i})f(\vec{w})\right ) \nonumber
\end{align}
here
\begin{align}
\left\{\hat{w}_{1}, \ldots, \hat{w}_{d}\right\}=\left\{x_{r+1}q^{-1}, x_{r+1}q^{-2}, \ldots, x_{r+1} q^{-d_{r+1}}, \ldots, x_{n}q^{-1},  x_{n} q^{-2}, \ldots, x_{n} q^{-d_{n}} \right\} \nonumber
\end{align} 
and the order to take limit is from $w_1$ to $w_d$.

For a given partition $\vec{d}^{\prime}=(d_{r+1}, \cdots, d_n )$, we compute $E_{\vec{d}}^{\prime}$ by doing the following change of variables
\begin{align}
	w_{n_{i}}^{i}=x_{i} q^{-n_{i}} z_{n_{i}}^{i} \quad i=r+1, \ldots, n \quad n_{i}=1, \ldots, d_{i} \nonumber
\end{align}

\begin{align}
f(\vec{w})=& \frac{1}{(1-q)^{d} d!\prod_{i,n_i}z^i_{n_i}}\cdot  \prod_{i=r+1}^{n}\prod_{n_i\neq n_j}^{d_i}          \frac{1-q^{n_{ji}} z_{n_i}^{i}/z_{n_j}^{i}}{1-q^{n_{ji}+1} z_{n_i}^{i}/z_{n_j}^{i}} \nonumber \\
& \times \prod_{i,j=r+1|i\neq j}^{n}\prod_{n_i=1}^{d_i}\prod_{n_j=1}^{d_j}  \frac{1-q^{n_{ji}} z_{n_i}^{i}/z_{n_j}^{j}x_{ij}}{1-q^{n_{ji}+1} z_{n_i}^{i}/z_{n_j}^{j}x_{ij}} \nonumber \\ 
&\times  \frac{\prod_{i=r+1}^{n}\prod_{n_i=1}^{d_i}(x_i q^{-n_i}z_{n_i}^{i})^{l-1}}{ \prod_{i,j=r+1|i\neq j}^{n}\prod_{n_i=1}^{d_i} (1-x_{ij}q^{1-n_i}z^{i}_{n_i})} \nonumber \\
&\cdot \frac{1}{ \prod_{i=r+1}^{n}\prod_{n_i=1}^{d_i} (1-q^{1-n_i}z^{i}_{n_i})} \cdot \frac{1}{ \prod_{i=r+1}^{n}\prod_{j=1}^{r}\prod_{n_i=1}^{d_i} (1-x_{ji}q^{n_i}/z^{i}_{n_i})} \nonumber
\end{align}
note that 

\begin{align}
 	\lim_{z^i_{d_i}\rightarrow 1 }\cdots\lim_{z^i_{1}\rightarrow 1 }\left(\prod_{n_i=1}^{d_i}(z_{n_i}^{i}  - 1 ) \cdot \frac{1}{\left(1-z^i_{1}\right)\left(1-z^i_{2}/z^i_1\right)\ldots \left(1-z^i_{d_i}/z^i_{d_i-1}\right)z^i_1 \cdots z^i_{d_i} }\right) =(-1)^{d_i}  \nonumber
\end{align}
where the order to take limits is from $z_1^i$ to $z_{d_i}^i$. So the residues for one specific configuration of residues of type $\vec{d'}$ is 
\begin{align}
F_{\vec{d}^{\prime}}=&\frac{(-1)^d}{(1-q)^{d} d!}\cdot \displaystyle \prod_{i=r+1}^{n}\left(\prod_{n_i\neq n_j|n_{ji}\neq -1}^{d_i}           \frac{1-q^{n_{ji}} }{1-q^{n_{ji}+1} } \cdot \prod_{n_i=2}^{d_i}\frac{1-q^{-1}}{1-q^{1-n_i}} \right) \nonumber \\
&\times \prod_{i,j=r+1|i\neq j}^{n}\prod_{n_i=1}^{d_i}\prod_{n_j=1}^{d_j}\frac{1-q^{n_{ji}} x_{ij}}{1-q^{n_{ji}+1} x_{ij}} \nonumber \\ 
&\times  \frac{\prod_{i=r+1}^{n}\prod_{n_i=}^{d_i}(x_i q^{-n_i})^{l}}{\prod_{i,j=r+1|i\neq j}^{n}\prod_{n_i=1}^{d_i} (1-x_{ij}q^{1-n_i})}
\cdot \frac{1}{ \prod_{i=r+1}^{n}\prod_{j=1}^{r}\prod_{n_i=1}^{d_i} (1-x_{ji}q^{n_i})} \nonumber
\end{align}
after almost same computation as for $E_{\vec{d}}$, we can simplify the above equation to 
\begin{align}
(-1)^d\frac{\prod_{i=r+1}^{n}x_{i}^{ld_i}q^{-\frac{ld_i(d_i+1)}{2}}}{\prod_{i=r+1}^{n}(q; q)_{d_i} \prod_{i \neq j|i,j=r+1 }^{n}\left(q^{d_{ij}+1}x_{ji};q \right)_{d j}\prod_{i=r+1}^{n} \prod_{j=1}^{r} (qx_{ji} ;q)_{d_i}} \nonumber
\end{align}
which proves
\begin{align}
F_d=(-1)^d B\left( \vec{x},I^{\complement } ,-l \right) \nonumber
\end{align}
Since the residue at infinity is zero, using Cauchy Residue Theorem $d$ times,
\begin{align*}
\int_{C_{\rho}} \ldots \int_{C_{\rho}} f(\vec w) \frac{dw_1}{2\pi\sqrt{-1}w_1} \ldots \frac{dw_d}{2\pi\sqrt{-1}w_d}\\
=(-1)^d\int_{C^{'}_{\rho}} \ldots \int_{C^{'}_{\rho}} f(\vec w) \frac{dw_1}{2\pi\sqrt{-1}w_1} \ldots \frac{dw_d}{2\pi\sqrt{-1}w_d}
\end{align*}
then we arrive at (\ref{Thm-A}) (\ref{Thm-B}) and (\ref{Thm-1}) of the following proposition stated in the introduction:
\begin{Proposition}	
	Denoted by $[n]$ the set of elements $\{ 1, \ldots,n \}$, let $ \emptyset \neq I \subsetneq [n] $ be a subset of $[n]$, $|I|$ be its cardinality, and denoted by $I^\complement$ the complementary set of $I$ in $[n]$. Then for constant positive integers  $d$, $n$ and integer $l$ with restriction: $1-|I|\leq l\leq n-|I|-1$  , let $A_d\left(   \vec{x} , I ,l\right)$ and $B_d\left(  \vec{x}, I,l\right)$ be two rational functions in $\vec{x}$ and $q$ with an extra data $l$.
	\begin{align}
	A_d\left(  \vec{x}, I, l\right)=&\sum_{| {d}_I|=d}\frac{\left( \prod_{i \in I}x_{i}^{d_i}q^{\frac{d_i(d_i-1)}{2}}\right)^l}{\prod_{i ,j \in I}\left( q^{d_{ij}+1}x_{ij};q \right)_{d j}\prod_{i \in I } \prod_{j \in I^\complement } (qx_{ij};q )_{d_i}}   \label{Thm-A} \\
	B_d\left(  \vec{x}, I, l\right)=&\sum_{|\vec{d}_{I}|=d}\frac{\left(\prod_{i \in I }x_{i}^{-d_i}q^{\frac{d_i(d_i+1)}{2}}\right)^l}{ \prod_{i , j \in I }\left( q^{d_{ij}+1}x_{ji};q \right)_{d j}\prod_{i \in I } \prod_{j \in I^\complement } (qx_{ji} ;q)_{d_i}} \label{Thm-B}
	\end{align}
	where $\vec{d}_I$ is $|I|$-tuple of non negative integers, and $|\vec{d}_I|:=\sum_{i \in I} d_i$. $x_i, i=1,...,n$ are parameters. For convenience,we use the notation $x_{ij}:=x_{i}/x_{j}$ and $d_{ij}:=d_i-d_j$. Then we have 
	\begin{align}
		A_d\left(  \vec{x},I,l\right)= 	B_d\left(  \vec{x},I^\complement,-l\right)  {\label{Thm-1}}
    \end{align}
\end{Proposition}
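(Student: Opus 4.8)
The plan is to realize both sides of the identity as the two possible evaluations of one symmetric multivariable contour integral, and to compare them via the residue theorem. By relabelling $x_1,\dots,x_n$ it suffices to treat $I=\{1,\dots,r\}$, so that $I^\complement=\{r+1,\dots,n\}$; the general case then follows from the symmetry of the construction. I would work under the genericity hypotheses $|q|<1$, $x_ix_j^{-1}\neq q^k$ for $i\neq j$, and pick a radius $\rho$ with $\max_i|x_i|<\rho<\min_i|q|^{-1}|x_i|$, recovering the full identity at the end by analytic continuation in the $x_i$. The engine of the argument is the rational function $f(w_1,\dots,w_d)$ in (\ref{f}): its factor $\prod_{i\neq j}(w_i-w_j)/(w_i-qw_j)$ forces residues to be collected along geometric strings that are successive $q$-multiples of one another, while the factor $w_i^{l-1}/\bigl[\prod_{j\le r}(1-x_j/w_i)\prod_{j>r}(1-qw_i/x_j)\bigr]$ places simple poles at $w_i=x_j$ with $j\in I$ inside $C_\rho$ and at $w_i=q^{-1}x_j$ with $j\in I^\complement$ outside. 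I then form the iterated integral $E_d$ of (\ref{integration}) over the counter-clockwise circles $C_\rho$.

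First I would evaluate $E_d$ by contracting the contours inward. The decisive combinatorial observation, made rigorous by the recursive residue computation and the order-independence check (\ref{onebyoneVSall}), is that $|q|<1$ together with the choice of $\rho$ rules out every interior pole of the form $q^{-k}x_j$ with $k\ge 1$, so each admissible configuration of interior poles is a disjoint union of strings $x_i,qx_i,\dots,q^{d_i-1}x_i$ with $i\in I$ and $\sum_{i\in I}d_i=d$. Since $f$ is symmetric, each such partition $\vec d$ is produced by exactly $d!$ orderings of the peeling. Summing the residues, grouping the outcome into the factors $E^1_{\vec d},E^2_{\vec d},E^3_{\vec d}$ and using the elementary reduction $P_d/(1-q)^d=1/(q;q)_d$, one arrives at Lemma \ref{iterated-residue} and the identification $E_d=A_d(\vec x,I,l)$.

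Next I would evaluate the same integral by expanding the contours outward, i.e. reversing orientation. The key analytic input is that the residue at infinity vanishes: in each variable $w_i$ the denominator of $f$ exceeds the numerator in degree by $n-l+1-r$, and the upper level bound $l\le n-|I|-1$ forces this to be $\ge 2$. The exterior poles then assemble into strings $q^{-1}x_j,\dots,q^{-d_j}x_j$ with $j\in I^\complement$, and the same symmetry and string bookkeeping --- now producing a sign $(-1)^{d_i}$ from the limit of the rescaled $z$-variables --- identifies the outward evaluation with $(-1)^d B_d(\vec x,I^\complement,-l)$. Applying the Cauchy residue theorem $d$ times to pass between the inward and outward evaluations contributes exactly the compensating factor $(-1)^d$, so the two computations match and yield $A_d(\vec x,I,l)=B_d(\vec x,I^\complement,-l)$.

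I expect the main obstacle to be the residue bookkeeping in the inward step: proving that the peeling of one variable at a time is independent of the order (so that each partition is counted with multiplicity exactly $d!$ and no spurious configurations survive), and tracking the many $q$-powers through the simplification of $E^3_{\vec d}$. The role of the level hypothesis is concentrated in two places --- the lower bound $1-|I|\le l$ is what keeps $f$ regular at $w_i=0$ inside $C_\rho$ (equivalently, makes the auxiliary function $g$ analytic on the polydisc), and the upper bound $l\le n-|I|-1$ is what kills the residue at infinity --- so verifying both degree conditions carefully is precisely what pins down the admissible interval $1-|I|\le l\le n-|I|-1$.
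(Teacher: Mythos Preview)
Your proposal is correct and follows essentially the same approach as the paper: the same integrand $f$ of (\ref{f}), the same contour $C_\rho$, the same inward/outward iterated residue evaluation organized into $q$-strings, the same decomposition $E_{\vec d}=E^1_{\vec d}E^2_{\vec d}E^3_{\vec d}$, and the same degree count $n-l+1-r\ge 2$ to kill the residue at infinity. Your identification of where each inequality on $l$ is used (the lower bound $1-|I|\le l$ making $g$ analytic at $w_i=0$, the upper bound $l\le n-|I|-1$ forcing vanishing at infinity) matches the paper exactly.
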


\subsection{Examples}
\label{sec:3}
In the following two examples, we show how the proof of Proposition \ref{Thm-0} works.
\begin{Example}[d=1\label{d=1}]
 For the case l=0, d=1, r=2, n=3	. (\ref{f}) becomes the following simple form
 \begin{align*}
 f(w)=&\frac{1}{(1-q) }  \frac{w^{-1}}{(1-x_1/w)(1-x_2/w) (1-qw/x_{3})} 	
 \end{align*}
 Consider integration (\ref{integration}), then there are simple poles of type $(1,0)$ and $(0,1)$ in the counter $C_{\rho}$:
 \begin{itemize}
 \item type (1,0): $ w  =  x_1 $  
 \item type (0,1): $ w  =  x_2 $
 \end{itemize}
Then the residue for each type comes as follows:
 \begin{itemize}
 \item type ${(1,0)}$:  
   \begin{align*}
   E_{(1,0)}=\underset{\hat{w} = x_1}{\operatorname{Res}}f = \frac{1}{(1-q)(1-x_{21})(1-qx_{13})} 
   \end{align*}
 \item type ${(0,1)}$: 
   \begin{align*}
    E_{(0,1)}=\underset{\hat{w} = x_2}{\operatorname{Res}}f= \frac{1}{(1-q)(1-x_{12})(1-qx_{23})}
   \end{align*}
 \end{itemize}
 and there is only one simple pole $w=q^{-1}x_3 $ in the counter $C^{\prime}_{\rho}$, so 
 \begin{itemize}
 \item type  $1$:	
 \begin{align*}
 	E^{\prime}_{1}= \underset{\hat{w} = q^{-1}x_3}{\operatorname{Res}}f = \frac{-1}{(1-q)(1-qx_{13})(1-qx_{23})}
 \end{align*}
 \end{itemize}
 and it is easy to obtain 
 \begin{align*}
 	\frac{1}{(1-q)(1-x_{21})(1-qx_{13})} +\frac{1}{(1-q)(1-x_{12})(1-qx_{23})} = \frac{1}{(1-q)(1-qx_{13})(1-qx_{23})}
 \end{align*}
 which agrees with (\ref{Thm-1}).  
 \end{Example}

\begin{Example}[d=2\label{d=2}]
 For the case l=0, d=2, r=2, n=3	. (\ref{f}) becomes the following simple form
 \begin{align*}
 f(\vec w)=&\frac{1}{2(1-q)^2 }\prod_{i \neq j}^2 \frac{1-w_{i}/w_{j}}{1-qw_{i}/w_{j}}\prod_{i=1}^2 \frac{w_i^{-1}}{\prod_{j=1}^{2}(1-x_j/w_{i}) \cdot (1-qw_{i}/x_{3})} 	
 \end{align*}
 Consider integration (\ref{integration}), then there are simple poles of type $(2,0)$, $(1,1)$ and $(0,2)$ in the counter $C_{\rho_i}$:
 \begin{itemize}
 \item type (2,0): $\{ w_1, w_2 \} = \{ x_1, x_1 q \} $  
 \item type (1,1): $\{ w_1, w_2 \} = \{ x_1, x_2  \} $  
 \item type (0,2):  $\{ w_1, w_2 \} = \{ x_2, x_2 q \} $  
 \end{itemize}
Then the residue for each type comes as follows:
 \begin{itemize}
 \item $type \ {(2,0)}$:  
     \begin{align*}
   2!E_{(2,0)} &=	\underset{\hat{w}_{2}= qx_1}{\operatorname{Res}}\underset{\hat{w}_{1} = x_1}{\operatorname{Res}}f+\underset{\hat{w}_{2}= x_1}{\operatorname{Res}}\underset{\hat{w}_{1} = qw_2}{\operatorname{Res}}f \\ 
   &=\frac{1}{2(1-q)^2} \frac{1}{(1+q)(1-q x_{13})(1-q^2x_{13})(1-x_{21})(1-q^{-1}x_{21})} \\
   	&+   \frac{1}{2(1-q)^2} \frac{1}{(1+q)(1-q^2x_{13})(1-qx_{13})(1-q^{-1}x_{21})(1-x_{21})}  \\ 
   &= \frac{1}{(1-q)^2} \frac{1}{(1+q)(1-q^2 x_{13})(1-q x_{13})(1-q^{-1} x_{21})(1- x_{21})}
     \end{align*}
 \item type  ${(1,1)}$: 
 \begin{align*}
    2!E_{(1,1)} &= \underset{\hat{w}_{2} = x_2}{\operatorname{Res}}\underset{\hat{w}_{1} = x_1}{\operatorname{Res}}f +\underset{\hat{w}_{2} = x_1}{\operatorname{Res}}\underset{\hat{w}_{1} = x_2}{\operatorname{Res}}f \\ 
    &= \frac{1}{2(1-q)^2} \frac{1}{(1-qx_{12})(1-qx_{21})(1-qx_{13})(1-qx_{23})} \\
    &+  \frac{1}{2(1-q)^2} \frac{1}{(1-qx_{21})(1-qx_{12})(1-qx_{23})(1-qx_{13})} \\
    &= \frac{1}{(1-q)^2} \frac{1}{(1-q x_{21})(1-q x_{12})(1-q x_{23})(1-q x_{13})}
 \end{align*}
 \item type ${(0,2)}$: 
 \begin{align*}
2!E_{(0,2)} &= \underset{\hat{w}_{2} = qx_2}{\operatorname{Res}}\underset{\hat{w}_{1} = x_2}{\operatorname{Res}}f+\underset{\hat{w}_{2} = x_2}{\operatorname{Res}}\underset{\hat{w}_{1} = qw_2}{\operatorname{Res}}f \\
&= \frac{1}{2(1-q)^2}	\frac{1}{(1+q)(1-x_{12})(1-qx_{23})(1-q^{-1}x_{12})(1-q^2x_{23})} \\
&+ \frac{1}{2(1-q)^2}\frac{1}{(1+q)(1-q^{-1}x_{12})(1-q^2x_{23})(1-x_{12})(1-qx_{23})} \\
&= \frac{1}{(1-q)^2}\frac{1}{(1+q)(1-q^{-1} x_{12})(1-q^2 x_{23})(1- x_{12})(1-q x_{23})}
 \end{align*}	
 \end{itemize}
 Consider integration (\ref{integration-reverse}), then there are simple poles of type 2 in the counter $C^{\prime}_{\rho_i}$:
  \begin{itemize}
 \item type 2: $\{ w_1, w_2 \} = \{ q^{-1}x_3, q^{-2}x_3  \} $  
 \end{itemize}
Then the residue for each type 2 comes as follows:
\begin{itemize}
\item {type} $ {2}$:	
\begin{align*}
(-1)^22!E^{\prime}_{2} &= \underset{\hat{w}_{2} = q^{-2}x_3}{\operatorname{Res}}\underset{\hat{w}_{1} = q^{-1}x_3}{\operatorname{Res}}f+\underset{\hat{w}_{2} = q^{-1}x_3}{\operatorname{Res}}\underset{\hat{w}_{1} = q^{-1}w_2}{\operatorname{Res}}f \\
&=\frac{1}{(1+q)(1-q)^2(1-q^2 x_{13})(1-q x_{13})(1-q^2 x_{23})(1-q x_{23})} 
\end{align*}
\end{itemize}
by a little bit computation, we have
\begin{align*}
	E_2&=2!E_{(2,0)}+2!E_{(1,1)}+2!E_{(0,2)}  = E^{\prime}_2
\end{align*}
\end{Example}

\begin{Example}
From Proposition \ref{Thm-0}, if we take $n=3$, $l=0$ and $I=[2]$, we know that $A_d(\vec{x},[2],0)=B_d(\vec{x},[3]\backslash [2],0 )$. By the following computation, there is a phenomenon that we could extracting from $A_d(\vec{x},[2],0)$ to get $B_d(\vec{x},[3]\backslash [2],0 )$ times another factor, when $d=1,2$, i.e. $A_d(\vec{x},[2],0)=B_d(\vec{x},[3]\backslash [2],0 ) \times G(\vec{x}), \ d=1,2 $, thus we can conclude that $G(\vec{x})=1$. Furthermore, this is a general phenomenon for all $d$, see following Corollary \ref{identity=1}.

By definition $\vec{x}=\{x_1,x_2,x_3 \}$, so
\begin{align}
&A_d(\vec{x},[2],0)= \sum_{d_1+d_2=d}\frac{1}{(q;q)_{d_1}(q;q)_{d_2}(q^{d_{12}+1}x_{12};q)_{d_2}(q^{d_{21}+1}x_{12};q)_{d_1} (qx_{13};q)_{d_1}(qx_{23};q)_{d_2} }     \\
&B_d(\vec{x},[3]\backslash [2],0) = \frac{1}{(q;q)_d(qx_{13};q)_d(qx_{23};q)_d}	
\end{align}

For $d=1$,i.e. $(d_1,d_2)=(1,0)$ or $(0,1)$, we have
\begin{align}
& A_1(\vec{x},[2],0) =\sum_{d_1+d_2=1}\frac{1}{(q;q)_{d_1}(q;q)_{d_2}(q^{d_{12}+1}x_{12};q)_{d_2}(q^{d_{21}+1}x_{12};q)_{d_1} (qx_{13};q)_{d_1}(qx_{23};q)_{d_2} }  \nonumber  \\ 
=&\sum_{d_1+d_2=1}\frac{1}{(q;q)_1(qx_{13};q)_1(qx_{23};q)_1} \cdot\frac{(q;q)_1(qx_{13};q)_1(qx_{23};q)_1}{(q;q)_{d_1}(q;q)_{d_2}(q^{d_{12}+1}x_{12};q)_{d_2}(q^{d_{21}+1}x_{12};q)_{d_1} (qx_{13};q)_{d_1}(qx_{23};q)_{d_2} }      \nonumber \\
=&B_1(\vec{x},[3] \backslash [2] ,0) \times\sum_{(d_1,d_2)=(1,0),(0,1)}\frac{(q;q)_1}{(q;q)_{d_1}(q;q)_{d_2}}\prod_{i=1}^2 \left( \prod_{j \neq i}^{2} \frac{(q^{d_i+1}x_{i 3};q)_{1-d_i}}{(q^{d_{ij}+1}x_{i j};q)_{d_j} }\right) \nonumber \\
     =& B_1(\vec{x},[3] \backslash [2] ,0) \times \left(  \frac{1-qx_{23}}{1-x_{21}}+ \frac{1-qx_{13}}{1-x_{12}}   \right) \nonumber\\
     =& B_1(\vec{x},[3] \backslash [2] ,0)    \nonumber
\end{align}
for $d=2$,i.e.$(d_1,d_2)=(2,0),(1,1)$ or $(0,2)$, similarly, we have
\begin{align*}
&A_2(\vec{x},[2],0) = B_2(\vec{x},[3] \backslash [2] ,0)  \times  \sum_{(d_1,d_2)=(2,0),(1,1),(0,2)}\frac{(q;q)_2}{(q;q)_{d_1}(q;q)_{d_2}}\prod_{i=1}^2 \left( \prod_{j \neq i}^{2} \frac{(q^{d_i+1}x_{i 3};q)_{2-d_i}}{(q^{d_{ij}+1}x_{i j};q)_{d_j} }\right) \\
     &=  B_2(\vec{x},[3] \backslash [2] ,0)  \times \left( \frac{(1-qx_{13})(1-q^2x_{13})}{(1-q^{-1}x_{21})(1-x_{21})}+\frac{(1+q)(1-q^2x_{13})(1-q^2x_{23})}{(1-qx_{12})(1-qx_{21})} + \frac{(1-qx_{13})(1-q^2x_{13})}{(1-q^{-1}x_{12})(1-x_{12})}  \right) \\
     &=  B_2(\vec{x},[3] \backslash [2] ,0)  
\end{align*}	
\end{Example}

More generally, we have the following Corollary,
\begin{Corollary}\label{identity=1}
\begin{align*}
\sum_{d_1+d_2=d} \frac{(q;q)_d}{(q;q)_{d_1}(q;q)_{d_2}}   \prod_{j \neq i}^{2} \frac{(q^{d_i+1}x_{i 3};q)_{d-d_i}}{(q^{d_i-d_j+1}x_{i j};q)_{d_j}} =1
\end{align*}
\end{Corollary}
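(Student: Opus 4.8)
The plan is to obtain this identity as a normalized restatement of Proposition \ref{Thm-0}, specialized to $n=3$, $l=0$, and $I=[2]=\{1,2\}$ (so $I^\complement=\{3\}$), followed by purely algebraic rearrangement. Note that this choice satisfies the level condition $1-|I|\leq l\leq n-|I|-1$, which here reads $-1\leq 0\leq 0$, so the proposition applies. First I would write out $A_d(\vec{x},[2],0)$ from (\ref{A_d}): since $l=0$ the entire level factor $\left(\prod_{i\in I}x_i^{d_i}q^{d_i(d_i-1)/2}\right)^l$ equals $1$, and in the product $\prod_{i,j\in I}(q^{d_{ij}+1}x_{ij};q)_{d_j}$ the diagonal terms $i=j$ collapse to $(q;q)_{d_i}$ because $x_{ii}=1$. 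This yields
\begin{align*}
A_d(\vec{x},[2],0)=\sum_{d_1+d_2=d}\frac{1}{(q;q)_{d_1}(q;q)_{d_2}(q^{d_{12}+1}x_{12};q)_{d_2}(q^{d_{21}+1}x_{21};q)_{d_1}(qx_{13};q)_{d_1}(qx_{23};q)_{d_2}}.
\end{align*}

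Next I would compute $B_d(\vec{x},\{3\},0)$ from (\ref{B_d}): since $I^\complement=\{3\}$ is a singleton the summation index collapses to the single term $d_3=d$, the diagonal factor gives $(q;q)_d$, and the cross product over $j\in[2]$ gives $(qx_{13};q)_d(qx_{23};q)_d$, so that $B_d(\vec{x},\{3\},0)=\big[(q;q)_d(qx_{13};q)_d(qx_{23};q)_d\big]^{-1}$. Proposition \ref{Thm-0} asserts $A_d(\vec{x},[2],0)=B_d(\vec{x},\{3\},0)$, and multiplying both sides by the single denominator $(q;q)_d(qx_{13};q)_d(qx_{23};q)_d$ of $B_d$ turns the right-hand side into $1$ while turning each summand on the left into the shape claimed by the Corollary.

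The remaining step is to match the summand factor by factor. I would group the prefactor as $\frac{(q;q)_d}{(q;q)_{d_1}(q;q)_{d_2}}$, keep the two denominator factors $(q^{d_{12}+1}x_{12};q)_{d_2}^{-1}$ and $(q^{d_{21}+1}x_{21};q)_{d_1}^{-1}$ (these are exactly $\prod_{i=1}^{2}\prod_{j\neq i}(q^{d_i-d_j+1}x_{ij};q)_{d_j}^{-1}$), and then apply the elementary splitting identity $(a;q)_d=(a;q)_{d_i}\,(q^{d_i}a;q)_{d-d_i}$ with $a=qx_{i3}$ to rewrite $\frac{(qx_{i3};q)_d}{(qx_{i3};q)_{d_i}}=(q^{d_i+1}x_{i3};q)_{d-d_i}$ for $i=1,2$. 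These are precisely the numerator factors $\prod_{i=1}^{2}(q^{d_i+1}x_{i3};q)_{d-d_i}$ appearing in the Corollary, so the rearranged summand coincides with the stated one.

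There is no genuine analytic obstacle, since the Corollary is a normalized consequence of the already-proved Proposition \ref{Thm-0}; the only care required is the index bookkeeping. I would double-check the arithmetic $d-d_1=d_2$ and $d-d_2=d_1$, confirm the convention that $\prod_{i,j\in I}$ runs over the diagonal $i=j$ (which is what produces the $(q;q)_{d_i}$ factors in both $A_d$ and $B_d$), and verify the final form against the explicitly computed cases $d=1$ and $d=2$ in the preceding example, where exactly this rearrangement was carried out by hand and seen to sum to $1$.
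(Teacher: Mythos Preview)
Your proposal is correct and follows essentially the same route as the paper: specialize Proposition~\ref{Thm-0} to $n=3$, $l=0$, $I=[2]$, factor the single-term $B_d(\vec{x},\{3\},0)$ out of each summand of $A_d(\vec{x},[2],0)$ using the splitting identity $(qx_{i3};q)_d=(qx_{i3};q)_{d_i}(q^{d_i+1}x_{i3};q)_{d-d_i}$, and conclude that the remaining sum equals $1$. The bookkeeping checks you flag (diagonal terms $i=j$ giving $(q;q)_{d_i}$, and $d-d_i=d_j$) are exactly the ones the paper uses.
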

\begin{proof}
Set $l=0$, $r=2,n=3$ in (\ref{Thm-1}), we have
\begin{align}
	A_d(\vec{x},[2],0) =& \sum_{d_1+d_2= d}  \prod^2_{i,j=1}\frac{1}{(q^{d_{ij}+1}x_{ij};q)_{d_j}} \prod^2_{i=1}\frac{1}{(qx_{i3};q)_{d_i}} \nonumber \\
	=& \sum_{d_1+d_2= d}  \prod^2_{i=1}\left(  \frac{1}{(q;q)_{d_i}}\prod_{j \neq i}^2 \frac{1}{(q^{d_{ij}+1}x_{ij};q)_{d_j}} \cdot \frac{1}{(qx_{i3};q)_{d_i}}  \right)   \nonumber \\
    =& \sum_{d_1+d_2 = d}    \frac{(q^{d_1+1}x_{1 3};q)_{d-d_1}(q^{d_2+1}x_{2 3};q)_{d-d_2}}{(qx_{1 3};q)_{d}(qx_{2 3};q)_{d}}  \prod_{j \neq i}^2 \left( \frac{1}{(q;q)_{d_i}} \frac{1}{(q^{d_i-d_j+1}x_{i j};q)_{d_{j}}}\right) \nonumber \\
    =&  \sum_{d_1+d_2=d}  \frac{(q;q)_d}{(q;q)_d(qx_{1 3};q)_{d}(qx_{2 3};q)_{d}} \prod_{j \neq i}^2 \left(   \frac{(q^{d_i+1}x_{i 3};q)_{d-d_i}}{(q;q)_{d_i}(q^{d_{ij}+1}x_{i j};q)_{d_j} }\right) \nonumber \\
    =&  \sum_{d_1+d_2=d}B_d(\vec{x},[n] \backslash [2] ,0) \cdot \frac{(q;q)_d}{(q;q)_{d_1}(q;q)_{d_2}} \prod_{j \neq i}^2 \left(  \frac{(q^{d_i+1}x_{i 3};q)_{d-d_i}}{(q^{d_{ij}+1}x_{i j};q)_{d_j} }\right) \nonumber
    \end{align} 
since we know $A_d(\vec{x},I ,0)$ equals to $B_d(\vec{x},I^{\complement} ,0)$, we get the conclusion.
\end{proof}

\subsection{Boundary cases}
\label{sec:4}
For boundary cases $l=-r$, $l=n-r$, (\ref{Thm-1}) no longer holds, since the residue at infinity is nonzero. But we can add a new extra variable $x_{n+1}$ in (\ref{Thm-1}) and then taking limit to deduce some identites in boundary case, specifically
\begin{Corollary}
	\begin{itemize}
\item For $l=n-r$, we have
\begin{align}
	   A_d\left(  \vec{x},I,l\right)= \sum_{s=0}^d C_{s}(  \vec{x},I^\complement,d)	B_{d-s}\left(  \vec{x},q,I^\complement,-l\right) {\label{Thm-2}} 
\end{align}	
where $C_{s}( \vec{x},I,d) $ is defined as
\begin{align*}
C_{s}(  \vec{x},I,d)=\frac{(-1)^{|I|\cdot s}\prod_{i\in I^\complement}x_i^{s}}{(q; q)_{s} q^{s(d-s+|I|)}  }	
\end{align*}
\item For $l=-r$, we have
\begin{align}
	 B_d\left(  \vec{x},I^\complement,-l\right)= \sum_{s=0}^d D_{s}(  \vec{x},I,d)	A_{d-s}\left(  \vec{x},q,I,l\right) {\label{Thm-3}}
\end{align}
\begin{align*}
D_{s}(  \vec{x},I,d)=\frac{(-1)^{|I|\cdot s}\prod_{i \in I} x^{-s}_{i}}{ (q;q)_sq^{s(d-s)} }
\end{align*}
\end{itemize}
\end{Corollary}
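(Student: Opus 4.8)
The plan is to deduce both boundary identities \eqref{Thm-2} and \eqref{Thm-3} directly from the already-proven interior identity \eqref{Thm-1}, using the mechanism announced at the start of this subsection: adjoin one auxiliary variable $x_{n+1}$, apply \eqref{Thm-1} in the enlarged alphabet, and then take a one-sided limit in $x_{n+1}$. The reason this works is purely about the admissible window \eqref{level-condition}: the interior identity $A_d(\vec x,I,l)=B_d(\vec x,I^\complement,-l)$ holds precisely when $1-|I|\le l\le n-|I|-1$, and the two boundary values $l=n-r$ and $l=-r$ sit exactly one step outside this range — equivalently, the degree gap $n-l+1-r$ drops to $1$, so the residue at infinity in the contour proof of Proposition \ref{Thm-0} no longer vanishes. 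Adding a single variable restores the gap to $2$ and puts the boundary value back on the (now included) endpoint of the window.

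First I would treat $l=n-r$. Keep $I=\{1,\dots,r\}$ fixed but work over $[n+1]$, so the complement becomes $J=I^\complement\cup\{n+1\}$ and the new window is $1-r\le l\le (n+1)-r-1=n-r$; thus $l=n-r$ is the included right endpoint and \eqref{Thm-1} applies verbatim, giving $A_d^{(n+1)}(\vec x,I,l)=B_d^{(n+1)}(\vec x,J,-l)$. I would then send $x_{n+1}\to\infty$. On the left the only $x_{n+1}$-dependence of $A_d^{(n+1)}$ lies in the factors $(qx_i/x_{n+1};q)_{d_i}$ for $i\in I$, each tending to $1$, so the limit is exactly the boundary object $A_d(\vec x,I,l)$. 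The substance is the right-hand side: I would organize the sum defining $B_d^{(n+1)}(\vec x,J,-l)$ by $s:=d_{n+1}$, so that $\sum_{i\in I^\complement}d_i=d-s$, and track the $x_{n+1}$-dependence factor by factor. The numerator contributes $x_{n+1}^{(n-r)s}$; the diagonal factor $i=j=n+1$ gives $(q;q)_s$; the factors with $i=n+1,\,j\in I^\complement$ and the factors of the form $(qx_j/x_{n+1};q)_s$ all tend to $1$; and the factors with $j=n+1,\,i\in I^\complement$ each blow up like $(-1)^s(x_{n+1}/x_i)^s q^{sd_i-s(s-1)/2}$.

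A power count then shows the $x_{n+1}^{(n-r)s}$ in the numerator is cancelled exactly by the $x_{n+1}^{s|I^\complement|}=x_{n+1}^{(n-r)s}$ coming from the blown-up denominator factors, so every $s$-block has a finite, nonzero limit. The crucial simplification is that the $q$-powers coupling to the summation variables assemble into $\prod_{i\in I^\complement}q^{sd_i}=q^{s(d-s)}$, which depends only on the total $d-s$; this is what allows a single constant to be pulled out of the inner sum. The residual $x_{n+1}$-free factors then reassemble, with no spurious shift of the $x$-variables, precisely into $B_{d-s}(\vec x,I^\complement,-l)$, and the leftover constant — after collecting the signs and the surviving $q$-exponent $-(n-r)\frac{s(s+1)}{2}-s(d-s)+(n-r)\frac{s(s-1)}{2}=-s(d-s+n-r)$ — is exactly $C_s$. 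Summing over $s$ yields \eqref{Thm-2}.

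For $l=-r$ I would run the mirror argument, now adjoining $x_{n+1}$ to $I$ itself: with $I_{\mathrm{new}}=I\cup\{n+1\}$ of cardinality $r+1$ over $[n+1]$, the window becomes $-r\le l\le n-r-1$, placing $l=-r$ at the included left endpoint, and \eqref{Thm-1} gives $A_d^{(n+1)}(\vec x,I_{\mathrm{new}},l)=B_d^{(n+1)}(\vec x,I^\complement,-l)$; here I would instead let $x_{n+1}\to 0$. The right side tends to $B_d(\vec x,I^\complement,-l)$ since its only $x_{n+1}$-factors $(qx_{n+1}/x_i;q)_{d_i}$ go to $1$, while the $s$-block decomposition of $A_d^{(n+1)}$ produces $\sum_s D_s\,A_{d-s}(\vec x,I,l)$; the different surviving $q$-exponent — traceable to $q^{d_i(d_i-1)/2}$ in \eqref{A_d} versus $q^{d_i(d_i+1)/2}$ in \eqref{B_d} — is exactly why $D_s$ carries $q^{s(d-s)}$ rather than $q^{s(d-s+|I|)}$, giving \eqref{Thm-3}. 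The one genuinely delicate step throughout is this asymptotic bookkeeping of the $q$-Pochhammer symbols: one must check simultaneously that the powers of $x_{n+1}$ cancel (finiteness of the limit), that the $x_{n+1}$-free remainder is the lower-degree function $B_{d-s}$ (resp.\ $A_{d-s}$) with no residual variable shift, and that the accumulated sign and $q$-power collapse to the stated closed forms for $C_s$ and $D_s$; everything else is a direct invocation of \eqref{Thm-1} followed by a limit.
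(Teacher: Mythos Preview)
Your proposal is correct and follows essentially the same route as the paper: enlarge the alphabet by one variable so that the boundary value of $l$ lands at an included endpoint of the window \eqref{level-condition}, apply \eqref{Thm-1}, and then pass to the limit $x_{n+1}\to\infty$ (for $l=n-r$, with $x_{n+1}$ adjoined to the complement) respectively $x_{n+1}\to 0$ (for $l=-r$, with $x_{n+1}$ adjoined to $I$). Your bookkeeping of the asymptotics of the $q$-Pochhammer factors and the resulting $q$-exponent $-s(d-s+n-r)$ matches the paper's computation; your sketch is in fact more explicit than the paper about why the powers of $x_{n+1}$ cancel and why the inner sum reassembles into $B_{d-s}$ (resp.\ $A_{d-s}$).
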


\begin{proof}
Consider $[n+1]$, $I  \subsetneq [n+1] $, $\{n+1\} \notin I   $, $l=n-|I|$ in (\ref{Thm-1}), then we have 
\begin{align}
& \sum_{|\vec{d}_I|=d}\frac{\left( \prod_{i \in I}x_{i}^{d_i}q^{\frac{d_i(d_i-1)}{2}}\right)^l}{\prod_{i ,j \in I}\left( q^{d_{ij}+1}x_{ij};q \right)_{d j}\prod_{i \in I } \prod_{j \in I^\complement } (qx_{ij};q )_{d_i}} \label{LHS} \\
=& \sum_{|\vec{d}_{I^\complement}|=d}\frac{\left(\prod_{i \in I^\complement }x_{i}^{-d_i}q^{\frac{d_i(d_i+1)}{2}}\right)^{-l}}{ \prod_{i , j \in I^\complement }\left(q^{d_{ij}+1}x_{ji};q \right)_{d j}\prod_{i \in I^\complement } \prod_{j \in I } (qx_{ji} ;q)_{d_i}} \label{RHS}
\end{align}
It is easy to see taking  $\lim_{ x_{n+1} \rightarrow \infty }$ in (\ref{LHS}), we obtain 
\begin{align*}
	\lim_{ x_{n+1} \rightarrow \infty }(\ref{LHS})=A_d\left(  \vec{x},I,l \right), \ \ for \ \ l=n-|I|
\end{align*}
Now let's take limit $ \lim_{x_{n+1} \rightarrow \infty } $ in (\ref{RHS})
\begin{align}
&\sum_{|\vec{d}_{I^\complement}|=d}\frac{\left(\prod_{i \in I^\complement }x_{i}^{-d_i}q^{\frac{d_i(d_i+1)}{2}}\right)^{-l}}{ \prod_{i , j \in I^\complement }\left(q^{d_{ij}+1}x_{ji};q \right)_{d j}\prod_{i \in I^\complement } \prod_{j \in I } (qx_{ji};q )_{d_i}} \nonumber \\
=&\sum_{|\vec{d}_{I^\complement}|=d}  \frac{1}{(q;q)_{d_{n+1}}} \cdot \frac{1}{\prod_{j\in I}(qx_{j,n+1};q)_{d_{n+1}}} \cdot \frac{1}{\prod_{j \in \{ [n] \backslash I \} }(q^{d_{n+1}-d_j+1}x_{j,n+1};q)_{d_j}}  \label{two terms} \\
 \times &  \frac{(x^{d_{n+1}}_{n+1}q^{-\frac{d_{n+1}(d_{n+1}+1)}{2}})^l}{\prod_{i \in \{[n] \backslash I \} }(q^{d_i-d_{n+1}+1}x_{n+1,i};q)}  \label{limits}  \\
 \times & \prod_{i \in \{[n] \backslash I \} } \left(   \frac{1}{\prod_{j  \in \{[n] \backslash I \}   } (q^{d_i-d_j+1}x_{j i};q)_{d_{j}}} \cdot \frac{(x^{d_{i}}_{i}q^{\frac{-d_{i}(d_{i}+1)}{2}})^{l}}{ \prod_{j \in I }(qx_{j i};q)_{d_i}} \right)  \nonumber
\end{align}
the limits of last two terms in (\ref{two terms}) equal 1, and by a little bit computation, we obtain the limit of (\ref{limits}) equals
\begin{align*}
     (-1)^{d_{n+1}(n-r)}	\cdot q^{-(\sum_{i \in \{[n] \backslash I \} } d_i)d_{n+1}-(n-r)d_{n+1}} \prod_{i \in \{[n] \backslash I \}}x^{d_{n+1}}_i
\end{align*}
 then we obtain
\begin{align}
\lim_{x_{n+1} \rightarrow \infty} 	& \sum_{|\vec{d}_{I^\complement}|=d} \prod_{i \in I^\complement } \left(\frac{1}{   \prod_{j \in I^\complement}(q^{d_i-d_j+1}x_{j i};q)_{d_{j}}} \cdot \frac{(x^{d_{i}}_{i}q^{\frac{-d_{i}(d_{i}+1)}{2}})^{l}}{ \prod_{j \in I}(qx_{j i};q)_{d_i}} \right) \nonumber \\
=& \sum_{|\vec{d}_{I^\complement}|=d}  \frac{1}{(q;q)_{d_{n+1}}} \cdot \frac{(-1)^{d_{n+1}(n-|I|)}}{q^{(\sum_{i \in \{[n] \backslash I \}} d_i)d_{n+1}+(n-|I|)d_{n+1}}} \cdot \frac{1}{\prod_{i \in \{[n] \backslash I \}}x^{-d_{n+1}}_i}
\nonumber \\
 \times & \prod_{i \in \{[n] \backslash I \}} \left(   \frac{1}{\prod_{j \in \{[n] \backslash I \}}(q^{d_i-d_j+1}x_{j i};q)_{d_{j}}} \cdot \frac{(x^{d_{i}}_{i}q^{\frac{-d_{i}(d_{i}+1)}{2}})^{l}}{ \prod_{j \in I}(qx_{j i};q)_{d_i}} \right) \nonumber \\
=& \sum_{\alpha=0}^d  \frac{1}{(q;q)_{d-\alpha}} \cdot \frac{(-1)^{(d-\alpha)(n-|I|)}}{q^{(n-|I|+\alpha)(d-\alpha)}} \cdot \frac{1}{\prod_{i=r+1}^nx^{-(d-\alpha)}_i}  \cdot B_\alpha \left(  \vec{x},I^\complement,-l\right)  \nonumber
\end{align}
we obtain the conclusion. 

 Similarly, consider $A_d\left(  \vec{x}\cup x_{n+1} ,\tilde{I},l\right)$ and $B_d\left(  \vec{x} \cup x_{n+1},\tilde{I} ^\complement,-l\right)$, for $\tilde{I}= I \cup \{n+1\}$ and $l=-|I|$, from (\ref{Thm-1}) we have
\begin{align}
& \sum_{|\vec{d}_{\tilde{I}}| = d} \prod_{i \in \tilde{I} } \left(   \frac{1}{\prod_{j \in \tilde{I} }(q^{d_i-d_j+1}x_{i j};q)_{d_{j}}} \cdot \frac{(x^{d_{i}}_{i}q^{\frac{d_{i}(d_{i}-1)}{2}})^{l}}{ \prod_{j \in \tilde{I}^\complement }(qx_{i j};q)_{d_i}} \right) \label{LHS-(-r)} \\
=& \sum_{|\vec{d}_{\tilde{I}^\complement}| = d} \prod_{i \in \tilde{I}^\complement }\left(   \frac{1}{\prod_{j \in \tilde{I}^\complement }(q^{d_i-d_j+1}x_{j i};q)_{d_{j}}} \cdot \frac{(x^{d_{i}}_{i}q^{\frac{-d_{i}(d_{i}+1)}{2}})^{-l}}{ \prod_{j\in \tilde{I}}(qx_{j i};q)_{d_i}} \right) \label{RHS-(-r)}
\end{align}
It is easy to see that after taking $\operatornamewithlimits{lim}_{x_{n+1} \rightarrow 0} $ in (\ref{RHS-(-r)}), we obtain
\begin{align*}
B_d\left( \vec{x},I^\complement,l\right) , \ \ for \ \ l=-|I|
\end{align*}
First, rewrite (\ref{LHS-(-r)}) as follows,
\begin{align}
& \sum_{|\vec{d}_{\tilde{I}}| = d} \prod_{i \in \tilde{I} }  \left(  \frac{1}{ \prod_{j \in \tilde{I} }(q^{d_i-d_j+1}x_{i j};q)_{d_{j}}} \cdot \frac{(x^{d_{i}}_{i}q^{\frac{d_{i}(d_{i}-1)}{2}})^{-|I|}}{ \prod_{j\in \tilde{I}^\complement }(qx_{i j};q)_{d_i}} \right) \nonumber  \\
=&\sum_{|\vec{d}_{\tilde{I}}| = d} \frac{(\prod_{i \in {I} }x_i^{d_i}q^{\frac{d_i(d_i-1)}{2}})^{-|I|}}{\prod_{i , j \in {I} }(q^{d_{ij}+1} x_{ij};q)_{d_j} \prod_{i \in {I} }\prod_{j\in \{ [n] \backslash I\} }(qx_{ij};q)_{d_i}} \nonumber \\
\times & \frac{(x^{d_{n+1}}_{n+1}q^{\frac{d_{n+1}(d_{n+1}-1)}{2}})^{-|I|}}{(q;q)_{d_{n+1}}   \prod_{i \in I }(q^{d_i-d_{n+1}+1}x_{i,n+1};q)_{d_{n+1}} \prod_{j \in I}(q^{d_{n+1}-d_{j}+1}x_{n+1,j};q)_{d_{j}} \prod_{j \in \{ [n] \backslash I \} }(q x_{n+1,j};q)_{d_{n+1}}} \nonumber
\end{align}
Now let's take limit $ \lim_{x_{n+1} \rightarrow 0 } $ in the above formula, we obtain
\begin{align}
 &	\lim_{x_{n+1} \rightarrow 0 } \sum_{|\vec{d}_{\tilde{I}}| = d} \prod_{i \in \tilde{I}} \left(  \frac{1}{ \prod_{j \in \tilde{I}}(q^{d_i-d_j+1}x_{i j};q)_{d_{j}}} \cdot \frac{(x^{d_{i}}_{i}q^{\frac{d_{i}(d_{i}-1)}{2}})^{-|I|}}{ \prod_{j\in \tilde{I}^\complement}(qx_{i j};q)_{d_i}} \right)  \nonumber   \\
 =& \sum_{|\vec{d}_{\tilde{I}}| = d} \frac{(\prod_{i \in I }x_i^{d_i}q^{\frac{d_i(d_i-1)}{2}})^{-|I|}}{\prod_{i , j \in I }(q^{d_{ij}+1}x_{ij};q)_{d_j}\prod_{i\in I }\prod_{j \in \{[n] \backslash I \} }(qx_{ij};q)_{d_i}} \nonumber \\
 & \times \frac{(-1)^{|I| \cdot d_{n+1}}}{(q;q)_{d_{n+1}} q^{d_{n+1}(d-d_{n+1})} \prod_{i \in I } x^{d_{n+1}}_{i}} \nonumber \\
 =& \sum_{s=0}^d \frac{(-1)^{|I| \cdot s}}{(q;q)_{s} q^{s(d-s)} \prod_{i \in I } x^{s}_{i}} \times A_{d-s}\left( \vec{x} ,I, -|I| \right)   \nonumber
\end{align}
\end{proof}

\section{$K$-theoretic $I$-function with level structure}
\label{sec:5}
\subsection{Definitions}
\label{sec:6}
Let $X$ be a GIT quotient $V/\!/_\theta G$ where $V$ is a vector space and ${G}$ is a connected reductive complex Lie group. Let ${\mathcal{Q}}^{\epsilon}_{g,n}(X,\beta)$ be the moduli stack of $\epsilon$-stable quasimaps \cite{2013arXiv1304.7056C} parametrizing data $(C,p_1,...,p_n,\mathcal{P},s)$ where $C$ is an n-pointed genus $g$ Riemann surface, $\mathcal{P}$ is a principal $G$-bundle over $C$, $s$ is a section and $\beta \in \mathrm{Hom}(\mathrm{Pic}^G(V))$. There are natural maps:
\begin{align*}
e v_{i}: {\mathcal{Q}}^{\epsilon}_{g,n}(X,d) \rightarrow X, \quad i=1, \ldots, n	
\end{align*}
given by evaluation at the i-th marked point. There are line bundles
\begin{align*}
	L_{i} \rightarrow  {\mathcal{Q}}^{\epsilon}_{g,n}(X,d) , \quad i=1, \ldots, n
\end{align*}
called universal cotangent line bundles. The fiber of $L_i$ over the point $(C^{\epsilon},p_1,...,p_n,\mathcal{P},s)$ is the cotangent line to $C$ at the point $p_i$.

The permutation-equivariant $K$-theoretic quasimap invariants with level structures \cite{ruan2018level} are holomorphic characteristics over ${\mathcal{Q}}^{\epsilon}_{g,n}(X,d)$ of the sheaves:
\begin{align}
\left\langle \mathbf{t}(L), \ldots, \mathbf{t}(L) \right\rangle_{g, n, d}^{R,l,S_n,\epsilon}:=\pi_* \left({\mathcal{Q}}^{\epsilon}_{g,n}(X,d) ; \mathcal{O}_{g, n, d}^{v i r t} \otimes \prod_{m,i} L_{i}^{k}  t_{k,i} \mathrm{ev}_{i}^{*}\left(\phi_{i}\right) \otimes \mathcal{D}^{R,l} \right) \nonumber 
\end{align}
where $\mathcal{O}^{v i r}_{g,n,d}$ is called the virtual structure sheaf \cite{lee2004quantum}. And $\mathbf{t}(q)$ is defined as follows 
\begin{align*}
	\mathbf{t}(q)=\sum_{m \in \mathbb{Z}} t_{m} q^{m}, \quad t_{m}=\sum_{\alpha} t_{m, \alpha} w_{\alpha}
\end{align*}
where $\pi_*$ is the $K$-theoretic pushforward along the projection 
\begin{align*}
 \pi_* : [ {\mathcal{Q}}^{\epsilon}_{g,n}(X,d)/S_n ] \rightarrow [ pt ]
\end{align*}
and $\{\phi_\alpha \}$ is a basis in $K^0(X)\otimes Q$ and $t_{k,\alpha}$ are formal variables. The last term in (36)  is the level $l$ determinant line bundle over $\mathcal{Q}^{\epsilon}_{g,n}(X,d)$ defined as
\begin{align*}
	\mathcal{D}^{R,l} :=(\mathrm{det}R^{\bullet}\pi_*(\mathcal{P} \times _G R ))^{-l}
\end{align*}
the bundle $\mathcal{P} \times _G R $ is the pullback of the vector bundle $[V \times R / G]  \rightarrow [ V/G ]  $ along the evaluation map to the quotient stack $[V/G]$.

Similarly, we can define quasimap graph space $ \mathcal { QG }^{\epsilon} _ { 0 , n } ( X , \beta ) $ which parametrizes quasimaps with parametrized component $\mathbb{P}^1$, so there is a natural $\mathbb{C}^*$-action on quasimap graph space. Denoted by $\mathrm{F}_{0,\beta}$ the special fixed loci in $ ( \mathcal { QG }^{\epsilon} _ { 0 , n } ( X , \beta ) )^{\mathbb{C}^*}$, and denoted by $q$ the weight of cotangent bundle at $0 :=[1,0] $ of $\mathbb{P}^1$. for details, see \cite{2013arXiv1304.7056C}.
\begin{Definition}{\cite{ruan2018level}}
The permutation-equivariant $K$-theoretic $\mathcal{J}^{R,l,\epsilon}$-function of $V // {G}$ of level $l$ is defined as
\begin{align*}
\mathcal { J } _ { S _ { \infty } } ^ { R , l , \epsilon } ( \mathbf { t } ( q ) , Q ) 
 &:= \sum_{k \geq 0, \beta \in {\operatorname { Eff } ( V , \mathbf { G } , \theta )} } Q^{\beta} (ev_{\bullet})_{*} [\operatorname{Res}_{\operatorname{F}_{0,\beta}}( \mathcal{QG} _ { 0 , n } ^ { \epsilon } ( V / / \mathbf { G } , \beta )_{0})^{\mathrm{vir}} \otimes \mathcal { D } ^ { R , l } \otimes _ { i = 1 } ^ { n } \mathbf { t } ( L _ { i } ) ]^{S_n} \\
 &:= 1 + \frac { \mathbf { t } ( q ) } { 1 - q } +\sum _ { a } \sum _ { \beta \neq 0 } Q ^ { \beta } \chi \left( \operatorname{F} _ { 0 , \beta } , \mathcal { O } _ { \operatorname{F} _ { 0 , \beta } } ^ { \mathrm { vir } } \otimes { ev }_{\bullet} ^ { * } ( \phi _ { a } ) \otimes \left( \frac { \operatorname { t r } _ { \mathbb { C } ^ { * } } \mathcal { D } ^ { R , l } } { \lambda_{-1}^{\mathbb{C}^*}  N _ { \operatorname{F} _ { 0 , \beta } } ^ { \vee } } \right) \right) \phi ^ { a } \\
 &+ \sum_a \sum _ { n \geq 1 or \beta(L_\theta) \geq \frac{1}{\epsilon} \atop ( n , \beta ) \neq ( 1,0 ) } Q ^ { \beta } \left\langle \frac { \phi _ { a } } { ( 1 - q ) ( 1 - q L ) } , \mathbf { t } ( L ) , \ldots , \mathbf { t } ( L ) \right\rangle _ { 0 , n + 1 , \beta } ^ { R , l , \epsilon , S _ { n } } \phi ^ { a }
\end{align*}
where $\{ \phi_\alpha \}$ is a basis of $K^0(V/\!/{G})$ and $\{ \phi^\alpha \}$ is the dual basis with respect to twisted pairing  $( \ \ , \ \ )^{R,l}$ i.e.
\begin{align*}
	(u,v)^{R,l}:=\chi\left( X,u \otimes v \otimes \mathrm{det}^{-l}(V^{ss} \times_{G} R )  \right)
\end{align*}
\end{Definition}
\begin{Definition}{\cite{ruan2018level}}
When taking $\epsilon$ small enough, denoted by $\epsilon=0^{+}$, we call $\mathcal{J}^{R,l,0^{+}}(0)$ the small $I$-function of level $l$, i.e,
\begin{align*}
{I}^{R,l}(q;Q):= \mathcal { J } _ { S _ { \infty } } ^ { R , l , 0^{+} } ( 0 , Q ) =  1 + \sum _ { \beta \geq 0 } Q ^ { \beta } (ev_{\bullet})_{*} \left(  \mathcal { O } _ { \operatorname{F} _ { 0 , \beta } } ^ { \mathrm { vir } } \otimes  \left( \frac { \operatorname { t r } _ { \mathbb { C } ^ { * } } \mathcal { D } ^ { R , l } } { \lambda_{-1}^{\mathbb{C}^*}  N _ { \operatorname{F} _ { 0 , \beta } } ^ { \vee } } \right) \right) \cdot \mathrm{det}^l(V^{ss} \times_{G} R ) 
\end{align*}
\end{Definition}

\subsection{Level correspondence in Grassmann duality}
\label{sec:7}
Let $V$ be $r \times n$ matrixes $M_{r \times n}$, $G$ be the general linear group $GL_r$ and let $ \theta $ be the $\mathbf{det}: GL_r \rightarrow \mathbb{C}^*$, then we have
\begin{align*}
  	V/\!/_{\mathbf{det}}G = M_{r \times n}/\!/_{\mathbf{det}} G = Gr(r,n)
\end{align*}
 
  There is a natural $T=(\mathbb{C}^*)^n$-action $\mathbb{C}^n$ with weights $\mathbb{C}^n=\Lambda_1 + \cdots + \Lambda_n $, then deducing an action on $Gr(r,n)$ by $T \cdot A = AT$, $A \in  M_{r\times n}$. Using general abelian/non-abelian correspondence in \cite{2019arXiv190600775W} for $Gr(r,n)$, we have   
 \begin{align*}
  I^{Gr(r,n)}_{T} =& 1+ \sum _{d} \sum_{|\vec{d}|= d} \sum_{ \omega \in S_{r} / S_{r_{1}} \times \cdots \times S_{r_{h+1}}}	  \\
    &\omega \left[\frac{\prod_{1 \leqslant j<i \leqslant r} \prod_{ 1 \leqslant m \leqslant d_{i}-d_{j}}\left(1-L_{i} L_{j}^{-1} q^{m}\right)}{ \prod_{1 \leqslant i<j \leqslant r_{j} \atop 1 \leqslant m \leqslant d_{j}-d_{i}-1}\left(1-L_{i} L_{j}^{-1} q^{-m}\right) \prod_{1 \leqslant i<j \leqslant r}\left(1-L_{i}^{-1} L_{j}\right)}\prod_{i=1}^{r}\prod_{k=1}^{d_i}\prod_{m=1}^{n}\frac{1}{(1-q^kL_i\Lambda^{-1}_m)} \right] Q^d
  \end{align*}
 where $\vec{d}=\{d_1 \leq d_2 \leq \cdots \leq d_r \}$ such that $d_1 = d_2 = \cdots = d_{r_1} < d_{r_1+1}= \cdots = d_{r_1+r_2} < d_{r_1+\cdots+r_h}\cdots = d_{r_1+\cdots+r_h+r_{h+1}}$, i.e. $r_1+\cdots+r_{h+1}=r$. $\omega$ is the Weyl group acting on $L_i$ to change the index, $\{ L_i \}^r_{i=1}$ come from the filtration of tautological bundle $\mathcal{S}_r$ of $Gr(r,n)$. We could rewrite the equivariant $I$-function in the following way
 \begin{align}
  I^{Gr(r,n)}_{T} =& 1+ \sum _{d} \sum_{|\vec{d}|= d} \sum_{ \omega \in S_{r} / S_{r_{1}} \times \cdots \times S_{r_{h+1}}}\omega\left[\prod_{i,j=1}^r \frac{\prod^{d_i-d_j}_{k=-\infty}(1-q^kL_iL^{-1}_j)}{\prod^{0}_{k=-\infty}(1-q^kL_iL^{-1}_j)}  \prod_{i=1}^{r}\prod_{k=1}^{d_i}\prod_{m=1}^{n}\frac{1}{(1-q^kL_i\Lambda^{-1}_m)} \right] Q^d	\label{I-of-Gr}
 \end{align}
suppose $\omega$ changes $i_1$ to $i_2$ and $j_1$ to $j_2$, then one of the factors changes from 
\begin{align}
	 \frac{\prod^{d_{i_1}-d_{j_1}}_{k=-\infty}(1-q^kL_{i_1}L^{-1}_{j_1})}{\prod^{0}_{k=-\infty}(1-q^kL_{i_1}L^{-1}_{j_1})}\cdot \frac{\prod^{d_{i_2}-d_{j_2}}_{k=-\infty}(1-q^kL_{i_2}L^{-1}_{j_2})}{\prod^{0}_{k=-\infty}(1-q^kL_{i_2}L^{-1}_{j_2})} \label{unchange}
\end{align}
to 
\begin{align}
	 \frac{\prod^{d_{i_1}-d_{j_1}}_{k=-\infty}(1-q^kL_{i_2}L^{-1}_{j_2})}{\prod^{0}_{k=-\infty}(1-q^kL_{i_2}L^{-1}_{j_2})} \cdot \frac{\prod^{d_{i_2}-d_{j_2}}_{k=-\infty}(1-q^kL_{i_1}L^{-1}_{j_1})}{\prod^{0}_{k=-\infty}(1-q^kL_{i_1}L^{-1}_{j_1})} \label{changed}
\end{align}
since $\omega \in S_{r} / S_{r_{1}} \times \cdots \times S_{r_{h+1}}$, we have $d_{i_1} \neq d_{i_2}, d_{j_1} \neq d_{j_2}$. In (\ref{I-of-Gr}) we have an order of partition $\vec{d}$, one could see from (\ref{unchange}) to (\ref{changed}) that $\omega$-action is just rearrange $\{ d_i \}$ without changing the form. There is an unique $\omega \in S_{r} /\left(S_{r_{1}} \times \ldots \times S_{r_{h+1}}\right)$ whose inverse $\omega^{-1}$ arranges $\left(d_{1}, \ldots, d_{r}\right)$ in nondecreasing order $d_{1} \leq d_{2} \leq \ldots \leq d_{r}$ and then we have:
\begin{align}
I^{Gr(r,n)}_{T}=\sum_d \sum_{d_1+d_2+ \cdots + d_r = d} Q^d \prod_{i,j=1}^r \frac{\prod^{d_i-d_j}_{k=-\infty}(1-q^kL_iL^{-1}_j)}{\prod^{0}_{k=-\infty}(1-q^kL_iL^{-1}_j)}\prod_{i=1}^{r}\prod_{k=1}^{d_i}\prod_{m=1}^{n}\frac{1}{(1-q^kL_i\Lambda^{-1}_m)} \nonumber
\end{align}
note that in \cite{2011arXiv1110.3117T} where the author claimed a version of mirror theorem with a different $I$-function.

If we consider the standard representation of $GL_r$, denoted by $E_r$, then the associated bundle $\mathcal{P} \times _G R|_{F_{0,\beta}}$ can be identified with $\oplus_{i=1}^{r}L_i \otimes \mathcal{O}_{\mathbb{P}^1}(-d_i) $ 
\begin{align*}
	\mathrm{tr}_{\mathbb{C}^*} \mathcal{D}^{E_r,l}|_{F_{0,\beta}} =& \mathrm{tr}_{\mathbb{C}^*}\mathrm{det}^{-l}R^{\bullet}\pi_*( \oplus_{i=1}^{r} L_i \otimes \mathcal{O}_{\mathbb{P}^1}(-d_i) ) \\
	=& \mathrm{tr}_{\mathbb{C}^*} \mathrm{det}^{-l}( \oplus_{i=1}^{r} [L_i \otimes R^1\pi_*(\mathcal{O}_{\mathbb{P}^1}(-d_i))]^{-1}  ) \\
	=& \otimes_{i=1}^r \left( L_i^{d_i-1} \cdot q^{\frac{d_i(d_i-1)}{2}} \right)^l
\end{align*}
Similarly, if we take dual standard representation, denoted by $E_r^{\vee}$, then 
\begin{align*}
	\mathrm{tr}_{\mathbb{C}^*}\mathcal{D}^{E_r^\vee,l}|_{F_{0,\beta}} =& \mathrm{tr}_{\mathbb{C}^*}\mathrm{det}^{-l}( \oplus_{i=1}^{r} L^{-1}_i \otimes  R^{0}\pi_*(\mathcal{O}_{\mathbb{P}^1}(d_i)) ) \\
	=& \otimes_{i=1}^r \left( L_i^{d_i+1} \cdot q^{\frac{d_i(d_i+1)}{2}} \right)^l
\end{align*}
so the equivariant $I$-function of $Gr(r,n)$ with level structure is as follows
\begin{align}
I^{Gr(r,n),E_r,l}_{T,d}=\sum_{d_1+d_2+ \cdots + d_r = d} Q^d \prod_{i,j=1}^r \frac{\prod^{d_i-d_j}_{k=-\infty}(1-q^kL_iL^{-1}_j)}{\prod^{0}_{k=-\infty}(1-q^kL_iL^{-1}_j)}\prod_{i=1}^{r}\frac{(L^{d_{i}}_{i}q^{\frac{d_{i}(d_{i}-1)}{2}})^l}{\prod_{k=1}^{d_i}\prod_{m=1}^{n}(1-q^kL_i\Lambda^{-1}_m)} \label{I-1}
\end{align} 
and
\begin{align}
I^{Gr(r,n),E_r^\vee, l}_{T,d}=\sum_{d_1+d_2+ \cdots + d_r = d} Q^d \prod_{i,j=1}^r \frac{\prod^{d_i-d_j}_{k=-\infty}(1-q^kL_iL^{-1}_j)}{\prod^{0}_{k=-\infty}(1-q^kL_iL^{-1}_j)}\prod_{i=1}^{r}\frac{(L^{d_{i}}_{i}q^{\frac{d_{i}(d_{i}+1)}{2}})^l}{\prod_{k=1}^{d_i}\prod_{m=1}^{n}(1-q^kL_i \Lambda^{-1}_m)}	 \label{I-2}
\end{align}
\begin{Remark}
For the dual Grassmannian $Gr(n-r, n)$, here we still use the same presentation of GIT
quotient as in Grassmannian: $\mathrm{GL}(n-r, \mathbb{C})$ acts on $M_{(n-r) \times n}(\mathbb{C})$ by left matirx multiplication. $\left(\mathbb{C}^{*}\right)^{n}$-action on $\mathbb{C}^{n}$ is the dual action, so weights are $\mathbb{C}^{n}=\Lambda_{1}^{-1}+\cdots+\Lambda_{n}^{-1} .$ The action on $\operatorname{Gr}(n-r, n)$ is that $s \cdot B=B s^{-1},$ where $B \in M_{(n-r) \times n}(\mathbb{C})$ and $s \in T=\left(\mathbb{C}^{*}\right)^{n}$. So the corresponding equivariant $I$-function is as follows,
\begin{align}
I^{Gr(n-r,n),E_{n-r},l}_{T,d}=\sum_{d_1+d_2+ \cdots + d_{n-r} = d} Q^d \prod_{i,j=1}^{n-r} \frac{\prod^{d_i-d_j}_{k=-\infty}(1-q^k\tilde{L}_i\tilde{L}^{-1}_j)}{\prod^{0}_{k=-\infty}(1-q^k\tilde{L}_i\tilde{L}^{-1}_j)}\prod_{i=1}^{n-r}\frac{(\tilde{L}^{d_{i}}_{i}q^{\frac{d_{i}(d_{i}-1)}{2}})^l}{\prod_{k=1}^{d_i}\prod_{m=1}^{n}(1-q^k\tilde{L}_i\Lambda _m)} 	\nonumber
\end{align}
and
\begin{align}
	I^{Gr(n-r,n),E_{n-r}^\vee, l}_{T,d}=\sum_{d_1+d_2+ \cdots + d_{n-r} = d} Q^d \prod_{i,j=1}^{n-r} \frac{\prod^{d_i-d_j}_{k=-\infty}(1-q^k\tilde{L}_i\tilde{L}^{-1}_j)}{\prod^{0}_{k=-\infty}(1-q^k\tilde{L}_i\tilde{L}^{-1}_j)}\prod_{i=1}^{n-r}\frac{(\tilde{L}^{d_{i}}_{i}q^{\frac{d_{i}(d_{i}+1)}{2}})^l}{\prod_{k=1}^{d_i}\prod_{m=1}^{n}(1-q^k\tilde{L}_i \Lambda _m)} \nonumber
\end{align}
where $\tilde{L}_i$ for $i=1,\ldots,n-r$ come from the filtration of tautological bundle $\mathcal{S}_{n-r}$ over $Gr(n-r,n)$.	
\end{Remark}

Let $T$ act on Grassmannian $Gr(r,n)$ as before, then there are ${n \choose r} $ fixed pionts, i.e. denoted by $\{e_1, \ldots, e_n \}$, the basis of $\mathbb{C}^n$, then the subspace $V$ spanned by $\{e_{i_1}, \ldots, e_{i_r} \}$ is a $T$-fixed point . let 

\begin{align*}
\mathfrak{l}_{*}: {K}_{{T}}\left({Gr(r,n)}^{{T}}\right) \rightarrow {K}_{{T}}({Gr(r,n)})	
\end{align*}
the kernel and cokernel are $K_T(pt)$-modules and have some support in the torus $T$. From a very general localization theorem of Thomason \cite{thomason1992}, we know
\begin{align*}
\operatorname{supp} \text { Coker } \mathfrak{l}_{*} \subset \bigcup_{\mu}\left\{\mathfrak{t}^{\mu}=1\right\}	
\end{align*}
where the union over finitely many nontrivial characters $\mu$. The same is true of $\text { ker } \mathfrak{l}_{*}$, but since
\begin{align*}
{K}_{{T}}\left({Gr(r,n)}^{{T}}\right)={K}({Gr(r,n)}) \otimes_{\mathbb{Z}} {K}_{{T}}({pt})	
\end{align*}
has no such torsion, this forces $\text { ker } \mathfrak{l}_{*}=0$, so after inverting finitely many coefficients of the form $t^{\mu}-1$, we obtain an isomorphism, i.e. 
\begin{align*}
K_T^{loc}(Gr(r,n)^T) \cong K^{loc}_T(Gr(r,n))	
\end{align*}
we denote $K^{loc}_{T}(-)$ by
\begin{align*}
	K^{loc}_{T}(-) = K_T(-) \otimes_{R(T)} \mathcal{R}
\end{align*}
where $\mathcal{R} \cong \mathbb{Q}(t_1,\ldots,t_n)$ and $\{t_i\}$ are the charaters of torus $T$. 

Similarly, $T=\left(\mathbb{C}^{*}\right)^{n}$-action on $Gr(n-r,n)$ also has $\binom{n}{n-r} = \binom{n}{r}$ isolated fixed points, which is indexed by $(n-r)$-element subsets of $[n]$, so identification of $Gr(r,n)^{T}$ with $Gr(n-r,n)^{T}$ gives an $\mathcal{ R }$-module isomorphism of $K^{loc}_{T} ( Gr(r,n) )$ with $K^{loc}_{T} ( Gr(n-r,n) )$. Indeed, suppose $W$ is a subspace of dimension $r$ in a vector space $V$ of dimension $n$, then we have a natural short exact sequence
\begin{align*}
0 \rightarrow W \rightarrow V \rightarrow V / W \rightarrow 0
\end{align*}
taking the dual of this short exact sequence yields an inclusion of $(V / W)^{*}$ in $V^{*}$ with quotient $W^{*}$
\begin{align*}
0 \rightarrow(V / W)^{*} \rightarrow V^{*} \rightarrow W^{*} \rightarrow 0
\end{align*}
so $ \psi : W \mapsto (V/W)^{*}$ gives a cannocial equivariant isomorphism $Gr(r, V) \cong Gr(n-r, V^{*})$, where action of $T=\left(\mathbb{C}^{*}\right)^{n}$ on $V^{*}$ is induced from action of $T$ on $V$, thus, $\psi$ gives the canonical identification of fixed points
\begin{align} \label{idenFixPoints}
\psi : Gr(r,n)^{T} \longrightarrow Gr(n-r,n)^{T} \qquad <e_j>_{j \in I} \longmapsto <e^{j}>_{j \in I^\complement}
\end{align}
where $I$ is a set of $[n]$ with $|I|=r$, and $\{e^i\}_{i = 1} ^{n}$ is the dual basis of $\{e_i\}_{i = 1} ^{n}$. Now we can state the following Level correspondence in Grassmann duality
\begin{Theorem}(Level Correspondence)
For Grassmannian $Gr(r,n)$ and its dual Grassmannian $Gr(n-r,n)$ with standard $T=(\mathbb{C}^*)^n$ tours action, let $E_r$, $E_{n-r}$ be the standard representation of $\operatorname{GL}(r,\mathbb{C})$ and $\operatorname{GL}(n-r,\mathbb{C})$, respectively. Consider the following equivariant $I$-function
\begin{align*}
	I^{Gr(r,n),E_r,l}_T  =& 1 + \sum_{d=1}^\infty I^{Gr(r,n),E_r,l}_{T,d} Q^d ,\\
	I^{Gr(n-r,n),E_{n-r}^{\vee},-l}_T =& 1 + \sum_{d=1}^\infty I^{Gr(n-r,n),E^{\vee}_{n-r},-l}_{T,d} Q^d .	
\end{align*}
Then we have the following relations between $ I^{Gr(r,n),E_r,l}_{T,d} $ and $	I^{Gr(n-r,n),E^\vee_{n-r},-l}_{T,d} $ in $ K^{loc}_T(Gr(r,n)) \otimes \mathbb{C}(q) $ (which equals to $ K^{loc}_T(Gr(n-r,n)) \otimes \mathbb{C}(q) $):
\begin{itemize}
\item For $1-r\leq l\leq n-r-1$, we have 
\begin{align}
			I_{T,d}^{Gr(r,n),E_{r},l} = 	I_{T,d}^{Gr(n-r,n),E_{n-r}^\vee,-l}  \nonumber
\end{align}
\item For $l=n-r$, we have
\begin{align}
	   I_{T,d}^{Gr(r,n),E_{r},l}= \sum_{s=0}^d C_{s}( n-r,d)	I_{T,d-s}^{Gr(n-r,n),E_{n-r}^\vee,-l} \nonumber 
\end{align}	
where $C_{s}(k,d) $ is defined as
\begin{align*}
C_{s}(k,d)=\frac{(-1)^{k s}}{(q; q)_{s} q^{s(d-s+k)} \left(\bigwedge^{top}\mathcal{S}_{n-r} \right)^s   }	
\end{align*}
and $\mathcal{S}_{n-r}$ are the tautological bundle of $Gr(n-r,n)$
\item For $l=-r$, we have
\begin{align}
	 	I_{T,d}^{Gr(n-r,n),E_{n-r}^\vee,-l}= \sum_{s=0}^d D_{s}( r, d)	I_{T,d-s}^{Gr(r,n),E_{r},l} \nonumber
\end{align}
\begin{align*}
D_{s}(r,d)=\frac{(-1)^{rs}}{ (q;q)_sq^{s(d-s)}\left( \bigwedge^{top}\mathcal{S}_{r} \right)^s}
\end{align*}
and $\mathcal{S}_r$ are the tautological bundle of $Gr(r,n)$
\end{itemize}
\end{Theorem}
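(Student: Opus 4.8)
The plan is to prove all three cases by restricting both $I$-functions to the $T$-fixed points and matching the restrictions against the $q$-Pochhammer identities of Proposition \ref{Thm-0} and its boundary refinement. Since $\mathrm{ker}\,\mathfrak{l}_{*}=0$, the localization isomorphism $K^{loc}_T(Gr(r,n))\cong K^{loc}_T(Gr(r,n)^T)$ shows that two classes in $K^{loc}_T(Gr(r,n))\otimes\mathbb{C}(q)$ coincide if and only if their restrictions to every fixed point agree. Under the identification (\ref{idenFixPoints}) the fixed point $\langle e_j\rangle_{j\in I}$ of $Gr(r,n)$ is sent to the fixed point $\langle e^{j}\rangle_{j\in I^\complement}$ of $Gr(n-r,n)$, so it suffices to compare, for each $r$-subset $I\subset[n]$, the restriction of $I^{Gr(r,n),E_r,l}_{T,d}$ at $I$ with the restriction of $I^{Gr(n-r,n),E^\vee_{n-r},-l}_{T,d}$ at $I^\complement$.

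First I would carry out the fixed-point restriction of (\ref{I-1}). Writing $x_m:=\Lambda_m$ and restricting $\mathcal{S}_r$ to $I=\{i_1,\dots,i_r\}$ replaces the Chern roots by $L_a\mapsto\Lambda_{i_a}$, so that $L_iL_j^{-1}\mapsto x_{i_ai_b}$ and $L_i\Lambda_m^{-1}\mapsto x_{i_a,m}$. Splitting the factor $\prod_{k=1}^{d_i}\prod_{m=1}^{n}(1-q^kL_i\Lambda_m^{-1})$ according to $m\in I$ or $m\in I^\complement$, the external part produces $\prod_{i\in I}\prod_{j\in I^\complement}(qx_{ij};q)_{d_i}$, the internal diagonal part $m=i_a$ produces $(q;q)_{d_{i_a}}$, and the internal off-diagonal part telescopes against the two-sided product ratio: for $i\neq j$ in $I$ one checks $\frac{\prod_{k\le d_i-d_j}(1-q^kx_{ij})}{\prod_{k\le 0}(1-q^kx_{ij})}\cdot\frac{1}{(qx_{ij};q)_{d_i}}=\frac{1}{(q^{d_{ij}+1}x_{ij};q)_{d_j}}$. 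Together with the level term $(L_i^{d_i}q^{d_i(d_i-1)/2})^l\mapsto(\prod_{i\in I}x_i^{d_i}q^{d_i(d_i-1)/2})^l$, this yields
\[
I^{Gr(r,n),E_r,l}_{T,d}\big|_I=A_d(\vec{x},I,l).
\]
The analogous computation on the dual side, where the dual $T$-action gives weights $\Lambda_m^{-1}$ and $\tilde{L}_a\mapsto\Lambda_{j_a}^{-1}$ so that $\tilde{L}_i\Lambda_m\mapsto x_{m,j_a}$, and where $E^\vee_{n-r}$ produces the factor $(\tilde{L}_i^{d_i}q^{d_i(d_i+1)/2})^{-l}\mapsto(\prod_{i\in I^\complement}x_i^{-d_i}q^{d_i(d_i+1)/2})^{-l}$, gives $I^{Gr(n-r,n),E^\vee_{n-r},-l}_{T,d}\big|_{I^\complement}=B_d(\vec{x},I^\complement,-l)$.

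With these two identifications the first case is immediate: since $|I|=r$, the range $1-r\le l\le n-r-1$ is exactly the hypothesis $1-|I|\le l\le n-|I|-1$ of Proposition \ref{Thm-0}, which gives $A_d(\vec{x},I,l)=B_d(\vec{x},I^\complement,-l)$ at every fixed point, hence equality of the two classes. For the boundary values $l=n-r$ and $l=-r$ I would invoke (\ref{Thm-2}) and (\ref{Thm-3}) in place of Proposition \ref{Thm-0}; these express $A_d$ (resp. $B_d$) as a finite sum $\sum_{s=0}^{d}C_s\,B_{d-s}$ (resp. $\sum_{s=0}^{d}D_s\,A_{d-s}$) whose coefficients are explicit monomials in $\vec{x}$. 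It then remains to recognize these scalar coefficients as fixed-point values of global classes: the monomial $\prod_i x_i^{s}$ appearing in $C_s$ (resp. $D_s$) is the restriction at $I^\complement$ (resp. $I$) of a power of $\bigwedge^{top}\mathcal{S}_{n-r}$ (resp. $\bigwedge^{top}\mathcal{S}_r$), since $\bigwedge^{top}\mathcal{S}_{n-r}\big|_{I^\complement}=\prod_{j\in I^\complement}x_j^{-1}$. Matching these identifies $C_s(\vec{x},\cdot,d)$ with the fixed-point value of $C_s(n-r,d)$, and summing against $B_{d-s}\big|_{I^\complement}=I^{Gr(n-r,n),E^\vee_{n-r},-l}_{T,d-s}\big|_{I^\complement}$ yields the stated relations.

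The main obstacle will be the convention bookkeeping in the restriction step: keeping straight the dual $T$-action (weights $\Lambda_m^{-1}$ on $V^{*}$ and $\tilde{L}_a\mapsto\Lambda_{j_a}^{-1}$), the extra $\prod_i L_i^{l}$ coming from the normalization $\mathrm{det}^l(V^{ss}\times_G R)$ in the definition of the small $I$-function, which upgrades the $(L_i^{d_i-1}\cdots)^l$ of $\mathrm{tr}_{\mathbb{C}^*}\mathcal{D}^{E_r,l}$ to the $(L_i^{d_i}\cdots)^l$ appearing in (\ref{I-1}), the telescoping of the two-sided infinite products into the finite $q$-Pochhammer symbols of $A_d$ and $B_d$, and—most delicately—identifying the monomial coefficients in the boundary case with the correct power of $\bigwedge^{top}\mathcal{S}$ at each fixed point, where the exchange $I\leftrightarrow I^\complement$ must be tracked with care.
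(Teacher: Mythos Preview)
Your proposal is correct and follows essentially the same route as the paper: restrict both $I$-functions to each fixed point via the localization isomorphism, identify the restrictions with $A_d(\vec{x},I,l)$ and $B_d(\vec{x},I^\complement,-l)$ respectively, and then invoke Proposition~\ref{Thm-0} for the interior range and the boundary Corollary (\ref{Thm-2})--(\ref{Thm-3}) for $l=n-r$ and $l=-r$. Your telescoping computation is exactly the content of Lemma~\ref{final-lemma}, and your remark about identifying the monomial $\prod_i x_i^{\pm s}$ with the fixed-point value of $(\bigwedge^{top}\mathcal{S})^{\mp s}$ is the only step the paper leaves implicit.
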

\begin{proof}
Form the discussion above, we prove the above identity by comparing $i^*_I I^{E_r, l}_{T}$ and $i^*_{I^\complement } I^{E_{n-r}^\vee, -l}_{T}$. Let $I= (j_1, \cdots , j _r)$ be the subset of $[n]=\{1, \ldots, n \}$, with $|I|=r$. Denote $v_1, v_2,\cdots, v_r$ the fiber coordinates in the fiber of $\mathcal{S}$ at fixed point $<e_j>_{j \in I}$, $\forall ( t_1, \cdots , t_n ) \in \left(\mathbb{C}^{*}\right)^{n}$, with weights $\mathbb{C}^n= \Lambda_1 + \cdots + \Lambda_n $ and
\begin{align*}
	&  ( t_1, \cdots , t_n ) \cdot (e_{j_1}, \cdots, e_{j_r} ;v_1, v_2,\cdots, v_r )  = (t_{j_1}e_{j_1}, \cdots, t_{j_r} e_{j_r} ;v_1, v_2,\cdots, v_r )
	\\ & \sim \operatorname{diag}(t_{j_1},\cdots, t_{j_r}) \cdot (t_{j_1} e_{j_1}, \cdots, t_{j_r} e_{j_r} ;v_1, v_2,\cdots, v_r )
	=  (e_{j_1}, \cdots, e_{j_r} ; t_{j_1} v_1, t_{j_2} v_2,\cdots, t_{j_r} v_r ) 
\end{align*}
so the weights of $i^*_{I}\mathcal{S}_r$ are $\{\Lambda_i \}_{i \in I}$ and the weights of  $i^*_{I^\complement}\mathcal{S}_{n-r}$ are $\{\Lambda^{-1}_i \}_{i \in I^\complement}$. Since the $I$-function is symmetric respect to $\{L_i\}$, then we could take any choice of weights
\begin{align}
    i^*_I I^{Gr(r,n),E_r, l}_{T,d} = \sum_{|\vec{d}_I| = d} \prod_{i,j \in I } \frac{\prod^{d_i-d_j}_{k=-\infty}(1-q^k \Lambda_i \Lambda^{-1}_j)}{\prod^{0}_{k=-\infty}(1-q^k \Lambda_i \Lambda^{-1}_j)}\prod_{i \in I}\frac{(\Lambda^{d_{i}}_{i}q^{\frac{d_{i}(d_{i}-1)}{2}})^l}{\prod_{k=1}^{d_i}\prod_{m \in [n]}(1-q^k \Lambda_i\Lambda^{-1}_m)}	\nonumber
\end{align}
and
\begin{align}
	i^*_{I^{\complement}} I^{Gr(n-r,n),E_{n-r}^\vee, -l}_{T,d}=\sum_{|\vec{d}_{I^{\complement}}| = d}  \prod_{i,j \in I^{\complement}} \frac{\prod^{d_i-d_j}_{k=-\infty}(1-q^k\Lambda^{-1}_i \Lambda_j)}{\prod^{0}_{k=-\infty}(1-q^k\Lambda^{-1}_i \Lambda_j)} \prod_{i \in I^{\complement}} \frac{(\Lambda^{-d_{i}}_{i}q^{\frac{d_{i}(d_{i}+1)}{2}})^{-l}}{\prod_{k=1}^{d_i}\prod_{m \in [n]}(1-q^k\Lambda^{-1}_i \Lambda_m)}	 \nonumber
\end{align}
using notation $\Lambda_{ij}=\Lambda_i \Lambda_j^{-1}$, and the following Lemma \ref{final-lemma} , we obtain
\begin{align}
	i^*_I I^{Gr(r,n),E_r, l}_{T,d} = \sum_{|\vec{d}_I| = d} \prod_{i \in I} \left(\frac{1}{   \prod_{j \in I}(q^{d_i-d_j+1}\Lambda_{i j};q)_{d_{j}}} \cdot \frac{(\Lambda^{d_{i}}_{i}q^{\frac{d_{i}(d_{i}-1)}{2}})^l}{ \prod_{j \in I^{\complement} }(q\Lambda_{i j};q)_{d_i}} \right) \label{I-I}
\end{align} 
and 
\begin{align}
	i^*_{I^{\complement}} I^{Gr(n-r,n),E_{n-r}^\vee,-l}_{T,d} =  \sum_{|\vec{d}_{I^{\complement}}| = d}  \prod_{i \in I^{\complement} } \left(   \frac{1}{\prod_{j \in I^{\complement} }(q^{d_i-d_j+1}\Lambda_{j i};q)_{d_{j}}} \cdot \frac{(\Lambda^{d_{i}}_{i}q^{\frac{-d_{i}(d_{i}+1)}{2}})^{l}}{ \prod_{j \in I }(q\Lambda_{j i};q)_{d_i}} \right) \label{I-J}
\end{align}
Comparing (\ref{I-I}) and (\ref{I-J}) with (\ref{Thm-A}) and (\ref{Thm-B}), we obtain the conclusion.
\end{proof}

\begin{Lemma} \label{final-lemma}
Let $I$ be the subset of $[n]=\{1, \ldots, n \}$. We have
\begin{align*}
    \prod_{i,j \in I} \left( \frac{\prod^{d_{ij}}_{k=-\infty}(1-q^k x_{ij} )}{\prod^{0}_{k=-\infty}(1-q^k x_{ij} )} \frac{1}{\prod_{k=1}^{d_i}(1-q^k x_{i j})} \right) = \prod_{i,j \in I}\frac{1}{(q^{d_{ij}+1} x_{i j};q)_{d_{j}}}
\end{align*}    
\end{Lemma}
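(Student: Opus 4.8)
The plan is to prove the identity factor by factor over the index pairs $(i,j) \in I \times I$, since both sides are products of terms indexed by such pairs. Fix $i,j \in I$ and abbreviate $a = x_{ij}$ and $m = d_{ij} = d_i - d_j$. The first task is to reinterpret the regularized infinite product on the left. Both the numerator $\prod_{k=-\infty}^{m}(1-q^k a)$ and the denominator $\prod_{k=-\infty}^{0}(1-q^k a)$ share every factor with index $k \le \min(m,0)$, so their ratio collapses to the finitely many factors lying between the two upper limits; concretely this gives $\prod_{k=1}^{m}(1-q^k a)$ when $m>0$ and the reciprocal $1/\prod_{k=m+1}^{0}(1-q^k a)$ when $m<0$, both of which coincide with $(qa;q)_{m}$ under the three-case $q$-Pochhammer convention of the theorem statement. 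Thus the left-hand single-pair factor equals $(qa;q)_{d_{ij}} \big/ (qa;q)_{d_i}$, where I have also used $\prod_{k=1}^{d_i}(1-q^k a) = (qa;q)_{d_i}$.

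The key algebraic input is the $q$-Pochhammer addition formula $(b;q)_{m+n} = (b;q)_{m}\,(q^{m}b;q)_{n}$, which holds for \emph{all} integers $m,n$ — a point that matters here because $d_{ij}$ may be negative. I would justify it uniformly by writing $(b;q)_{d} = (b;q)_\infty / (bq^{d};q)_\infty$ (as a formal identity, or for $|q|<1$ as assumed in the construction of $f$), so that the defining infinite products telescope and the signs of the indices become irrelevant. Applying this with $b = qa$, $m = d_{ij}$, $n = d_j$, and using $m+n = d_i$, yields $(qa;q)_{d_i} = (qa;q)_{d_{ij}}\,(q^{d_{ij}+1}a;q)_{d_j}$.

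Substituting this factorization into the expression from the first step, the factor $(qa;q)_{d_{ij}}$ cancels and the single-pair left-hand term reduces to $1/(q^{d_{ij}+1}x_{ij};q)_{d_j}$, which is exactly the corresponding factor on the right. Taking the product over all $i,j \in I$ then gives the lemma. As a sanity check, the diagonal terms $i=j$ have $d_{ij}=0$ and $x_{ij}=1$, so the numerator ratio is trivially $1$ (equal upper limits) and the left factor is $1/(q;q)_{d_i}$, matching $1/(q;q)_{d_j}$ on the right.

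I expect no serious obstacle: the content is entirely formal manipulation of $q$-Pochhammer symbols. The one place demanding care is the bookkeeping of the regularized infinite products and the validity of the addition formula when $d_{ij}<0$; handling both through the $(b;q)_\infty$-ratio representation is what makes the argument clean and sign-independent.
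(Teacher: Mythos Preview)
Your proof is correct and follows essentially the same factor-by-factor strategy as the paper's own argument. The only difference is cosmetic: the paper handles the two cases $d_i \geq d_j$ and $d_i \leq d_j$ by an explicit case split on the finite product, whereas you package both cases into the single $q$-Pochhammer addition formula $(qa;q)_{d_i} = (qa;q)_{d_{ij}}(q^{d_{ij}+1}a;q)_{d_j}$ justified via the $(b;q)_\infty$-ratio representation.
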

\begin{proof}
It is sufficient to consider one term, if $d_i \geq d_j $, then 
\begin{align*}
	LHS = \frac{\prod_{k=1}^{d_{ij}}(1-q^k x_{ij})}{\prod_{k=1}^{d_i}(1-q^k x_{ij})} = \frac{1}{\prod_{k=d_{ij}+1}^{d_i}(1-q^k x_{ij})} =RHS
\end{align*}
If $d_i \leq d_j$, then
\begin{align*}
	LHS = \frac{1}{\prod_{k=d_{ij}+1}^{0}(1-q^k x_{ij}) \prod_{k=1}^{d_i}(1-q^k x_{ij})} =\frac{1}{\prod_{k=d_{ij}+1}^{d_i}(1-q^k x_{ij})} =RHS
\end{align*}
\end{proof}


\begin{thebibliography}{spmpsci}

\bibitem{2013arXiv1307.5997B}
Bonelli, Giulio and Sciarappa, Antonio and Tanzini, Alessandro and Vasko, Petr, Vortex partition functions, wall crossing and equivariant Gromov-Witten invariants, \href{https://ui.adsabs.harvard.edu/abs/2013arXiv1307.5997B}{arXiv1307.5997B} (2013)

\bibitem{givental2000wdvv}
Givental, Alexander, On the WDVV-equation in quantum K-theory, Michigan Mathematical Journal, Volume 48, 295-304 (2000)

\bibitem{lee2004quantum}
Y.P. Lee, Quantum K-theory I: Foundations, Duke Mathematical Journal, Volume 121, 389-424 (2004)

\bibitem{2015arXiv150903903G}
Givental, Alexander, Permutation-equivariant quantum K-theory V. Toric q-hypergeometric functions, \href{https://arxiv.org/abs/1509.03903}{arXiv:1509.03903} [math.AG] (2015)

\bibitem{2016arXiv160206494T}
{Tseng}, Hsian-Hua and {You}, Fenglong, K-theoretic quasimap invariants and their wall-crossing, \href{https://arxiv.org/abs/1602.06494}{arXiv:1602.06494} [math.AG] (2016)

\bibitem{2013arXiv1304.7056C}
{Ciocan-Fontanine}, Ionut and {Kim}, Bumsig, Wall-crossing in genus zero quasimap theory and mirror maps, Algebraic Geometry, Volume 1, 400-448 (2014)

\bibitem{ruan2018level}
Yongbin, Ruan and Ming, Zhang, The level structure in quantum K-theory and mock theta functions, \href{https://arxiv.org/abs/1804.06552}{arXiv:1804.06552} [math.AG] (2018)

\bibitem{2019arXiv190600775W}
Yaoxiong, Wen, K-Theoretic $I$-function of $V//_{\theta} \mathbf{G}$ and Application, \href{https://arxiv.org/abs/1906.00775}{arXiv:1906.00775} [math.AG] (2019)

\bibitem{thomason1992}
Thomason, R. W., Une formule de Lefschetz en $K$ -théorie équivariante algébrique, Duke Mathematical Journal, Volume 68, 447-462 (1992)

\bibitem{2018arXiv180406552R}
Yongbin, Ruan and Ming, Zhang, The level structure in quantum K-theory and mock theta functions, \href{https://arxiv.org/abs/1804.06552}{arXiv:1804.06552} [math.AG] (2018)

\bibitem{Nekrasov0901}
Nekrasov, N. and Shatashvili, S., Supersymmetric Vacua and Bethe Ansatz, Nuclear Physics B - Proceedings Supplements, Volume 192, 91-112 (2009) 

\bibitem{Nekrasov0908}
Nekrasov, N. and Shatashvili, S., Quantization of Integrable Systems and Four Dimensional Gauge Theories, XVIth International Congress on Mathematical Physics, 265-289 (2010)

\bibitem{Nekrasov9606}
Nekrasov, N., Four dimensional holomorphic theories, 348. Princeton University, 1996

\bibitem{jockers2019wilson}
Hans Jockers and Peter Mayr and Urmi Ninad and Alexander Tabler, Wilson loop algebras and quantum K-theory for Grassmannians, \href{https://arxiv.org/abs/1911.13286}{arXiv:1911.13286} [hep-th] (2019)

\bibitem{felder2018analyticity}
Felder, Giovanni and M{\"u}ller-Lennert, Martin, Analyticity of Nekrasov partition functions, Communications in Mathematical Physics, Volume 364(2), 683-718 (2018)

\bibitem{2011arXiv1110.3117T}
Taipale, Kaisa, K-theoretic J-functions of type A flag varieties, \href{https://arxiv.org/abs/1110.3117}{arXiv:1110.3117} [math.AG] (2011)

\bibitem{ueda20193d}
Ueda, Kazushi and Yoshida, Yutaka, 3d N= 2 Chern-Simons-matter theory, Bethe ansatz, and quantum K-theory of Grassmannians, \href{https://arxiv.org/abs/1912.03792}{arXiv:1912.03792} [hep-th] (2019)

\bibitem{jockers20183d}
Jockers, Hans and Mayr, Peter, A 3d gauge theory/quantum k-theory correspondence, Advances in Theoretical and Mathematical Physics, Volume 24, 327-457 (2020)

\bibitem{cite-key}
Bonelli, Giulio and Sciarappa, Antonio and Tanzini, Alessandro and Vasko, Petr, Vortex Partition Functions, Wall Crossing and Equivariant Gromov--Witten Invariants, Communications in Mathematical Physics, Volume 333(2), 717-760 (2015)
\end{thebibliography}
\end{document}